\numberwithin{equation}{section}
\newtheorem{Theorem}{Theorem}[section]
\newtheorem{Lemma}{Lemma}[section]
\theoremstyle{definition}
\theoremstyle{remark}
\renewcommand{\r}{\rho}
\renewcommand{\u}{{\bf u}}
\renewcommand{\H}{{\bf H}}
\newcommand{\R}{{\mathbb R}}
\newcommand{\na}{\nabla}
\newcommand{\dl}{\delta}
\def\f{\frac}
\def\ov{\overline}
\def\hf1{^\f{1}{1-\xi^2}}
\def\be{\begin{equation}}
\def\en{\end{equation}}
\def\bs{\begin{split}}
\def\es{\end{split}}
\def\XXint#1#2#3{{\setbox0=\hbox{$#1{#2#3}{\int}$}
    \vcenter{\hbox{$#2#3$}}\kern-.5\wd0}}
\title[Local Solution of  Magnetohydrodynamic Equations]
{Local Strong Solution to the Compressible Magnetohydrodynamic
Flow with Large Data}
\author{Xiaoli li,  Ning Su,   and Dehua Wang}
\address{Department of Mathematical Sciences,  Tsinghua University,
                           Beijing 100084, China;  and
                           Department of Mathematics, University of Pittsburgh,
                           Pittsburgh, PA 15260, USA.}
\email{xllithu@gmail.com}
\address{Department of Mathematical Sciences,  Tsinghua University,
                           Beijing 100084, China.}
\email{nsu@math.tsinghua.edu.cn}
\address{Department of Mathematics, University of Pittsburgh,
                           Pittsburgh, PA 15260, USA.}
\email{dwang@math.pitt.edu}
\keywords {Magnetohydrodynamics (MHD),  zero magnetic diffusivity, strong solutions,  existence and uniqueness}
\subjclass[2000]{35Q36, 35D05, 76W05.}
\date{January 5, 2011}
\begin{document}

\begin{abstract}
The three-dimensional compressible magnetohydrodynamic (MHD)
isentropic flow with zero magnetic diffusivity is studied. The vanishing magnetic diffusivity causes significant difficulties due to the loss of dissipation of the magnetic field.  The
existence and uniqueness of local in time strong solution with large
initial data is established. The strong solution has weaker
regularity than the classical solution.  A generalized Lax-Milgram
theorem and a Schauder-Tychonoff-type fixed point argument are
applied with novel techniques and estimates for the strong solution.
\end{abstract}

\maketitle

\section{Introduction}  \label{sec1}

Magnetohydrodynamics (MHD) concerns the motion of conducting fluids,
such as gases, in an electromagnetic field. If a conducting fluid
moves in a magnetic field, electric fields are induced and an
electric current flow is developed. The magnetic field exerts forces
on these currents which considerably modify the hydrodynamic motion
of the fluid. On the other hand, the development of electric
currents yields a change in the magnetic field. There is a complex
interaction between the magnetic field and fluid dynamic phenomena,
and both hydrodynamic and electrodynamic effects have to be
considered. The equations for compressible magnetohydrodynamics
consist of the Euler equations of gas dynamics coupled with the
Maxwell's equations of electromagnetic field. The applications of
magnetohydrodynamics cover a very wide range of physical areas from
liquid metals to cosmic plasmas, for example, the intensely heated
and ionized fluids in an electromagnetic field in astrophysics,
geophysics, high-speed aerodynamics and plasma physics.

The equations of three-dimensional compressible magnetohydrodynamic
flow in the isentropic case have the following form (\cite{Ca, KL,
LL}):
\begin{subequations} \label{e1}
\begin{align}
&\r_t +\nabla\cdot(\r\u)=0, \label{e11}\\
&(\r\u)_t+\nabla\cdot\left(\r\u\otimes\u\right)+\nabla P
  =(\na \times \H)\times \H+\mu \Delta \u+(\lambda+\mu)\nabla(\nabla\cdot\u), \label{e12}\\
&\H_t-\nabla\times(\u\times\H)=0,\quad \nabla\cdot\H=0, \label{e13}
\end{align}
\end{subequations}
where $\r=\r(x,t)\in\R^+$ denotes the density,
$\u=\u(x,t)
\in\R^3$ the velocity field,
$\H=\H(x,t)
\in\R^3$ the magnetic field, and
$P(\r)=A\r^\gamma$ the pressure with a constant $A>0$ and the adiabatic exponent
$\gamma>1$. The viscosity coefficients $\mu$ and $\lambda$ of the flow are constants
satisfying $\mu>0$ and $2\mu+3\lambda>0$, which ensures that the operator $-\mu \Delta
\u-(\lambda+\mu)\nabla(\nabla\cdot\u)$ is a strictly elliptic
operator. The symbol $\otimes$
denotes the usual Kronecker tensor product. Usually, we refer to
\eqref{e11} as the continuity equation\ (mass conservation
equation), and \eqref{e12} as the momentum conservation equation.
It is well-known that the electromagnetic fields
are governed by the Maxwell's equations. In magnetohydrodynamics,
the displacement current  can be neglected (\cite{KL, LL}). As a
consequence, \eqref{e13} is called the induction equation. As for
the constraint $\nabla\cdot\H=0$, it can be seen just as a
restriction on the initial value of $\H$ since $(\nabla\cdot\H)_t=0$.
We remark that, the magnetic diffusivity in \eqref{e1} is zero, which arises in the physics regime with negligible electrical resistance, see \cite{CP}.

We consider the Cauchy problem of \eqref{e1} with the initial
condition:
\begin{equation}\label{IC}
(\r, \u, \H)(x,0)=(\r_0, \u_0, \H_0)(x), \quad x\in\R^3,
\end{equation}
and are interested in the existence of solutions to
\eqref{e1}-\eqref{IC}. When the magnetic diffusivity $\nu\ne 0$,
\eqref{e13} is
$$\H_t-\nabla\times(\u\times\H)=-\nabla\times(\nu\nabla\times\H),\quad
\nabla\cdot\H=0,$$
 and  there have been many studies and rich results in the literature:
see \cite{gw, gw2, f3,FJN, fy,h1, xw,HT, LL, w1} and the references therein.
When the magnetic diffusivity $\nu= 0$ as in \eqref{e1}, the mathematical analysis becomes
 much more difficult due to the loss of dissipation of the magnetic field, and to our best
  knowledge there have been no results on existence of solutions (even in the incompressible case).
The aim of  this paper is to establish the local existence and
uniqueness of strong solution to   system \eqref{e1} with large
initial data in the three-dimensional space $\R^3$. By a strong
solution, we mean a triplet $(\r, \u, \H)$ with $\u(\cdot, t)\in
W^{2,q}(\R^3)$ and $(\r(\cdot, t), \H(\cdot, t))\in W^{1,q}(\R^3),
\, 3<q\leq6$ satisfying \eqref{e1} almost everywhere with the
initial condition \eqref{IC}.
As for the global existence of classical
solutions of the small perturbation near an equilibrium for
compressible Navier-Stokes equations, we refer the interested
reader to \cite{MT1, MT} and the references cited therein. The
global existence of strong solutions with small perturbations near
an equilibrium for compressible Navier-Stokes equations was also
discussed in \cite{AI, SS}. Also see the discussions and references in \cite{AI, SS} for other related results on strong solutions.

Throughout this paper, the standard notations for Sobolev spaces
$W^{s, p}(\R^3)$ ($H^s(\R^3)$, when $p=2$) will be used. For
$p\in[1,+\infty]$, we denote by $L^p(0,T; X)$ the set of Bochner
measurable $X-$valued time dependent functions $\varphi$ such that
$t\mapsto \|\varphi\|_X$ belongs to $L^p(0,T),$ and the
corresponding Lebesgue norm is denoted by $\|\cdot\|_{L^p_T(X)}$.
Denote the Sobolev space
$W^{1,p}(0,T;X):=\{\varphi\mid \varphi\in
L^p(0,T;X), \  \varphi_t\in L^p(0,T;X)\}$,  and
$H^1(0,T; X):=W^{1,2}(0,T;X)$.
Precisely, we will establish the following result on existence and uniqueness in
this paper:
\begin{Theorem}\label{T20}
Assume that  $$\r_0\in W^{1,q}(\R^3)\cap H^1(\R^3),\ \ \u_0\in
\big(H^2(\R^3)\big)^3,\  \ \H_0\in \big(W^{1,q}(\R^3)\big)^3\cap
\big(H^1(\R^3)\big)^3$$ for some $q\in(3, 6]$,  and
$$\alpha\le \r_0\le\beta, \ \ \|\r_0\|_{W^{1,q}\cap
H^1}+\|\u_0\|_{H^2}+\|\H_0\|_{W^{1,q}\cap H^1} \le r_0$$
for some positive constants $\alpha, \beta$, and $r_0$.
Then there exist  positive constants  $\overline{T}=\ov{T}(r_0)$, $\alpha_1 (\ov{T}, r_0, \alpha)$,
and $\beta_1(\ov{T}, r_0, \beta)$, such that the Cauchy problem
\eqref{e1}-\eqref{IC} has a unique strong  solution $(\r,\u,\H)$ on
$\R^3\times(0,\ov{T})$ satisfying
$$\r\in L^\infty\big(0,\ov{T}; \ W^{1,q}(\R^3)\cap H^1(\R^3)\big);\quad \r_t\in L^\infty\big(0,\ov{T};
L^q(\R^3)\big);$$
$$\alpha_1(\ov{T},r_0,
\alpha)\le\r\le\beta_1(\ov{T},r_0,\beta);$$
$$\u\in \big(L^2(0,\ov{T}; \ W^{2,q}(\R^3)\cap H^2(\R^3))\big)^3; \quad \u_t\in
\big(L^2(0,\ov{T}; H^1(\R^3))\big)^3;$$
$$\H\in \big(L^\infty(0,\ov{T};\  W^{1,q}(\R^3)\cap H^1(\R^3))\big)^3; \quad \H_t\in \big(L^\infty\big(0,\ov{T};
L^q(\R^3))\big)^3.$$

\end{Theorem}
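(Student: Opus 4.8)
The plan is to construct the solution by an iteration scheme combined with a fixed-point argument, linearizing the fully coupled system at each step. First I would set up the linearized problem: given an approximate velocity field $\u$ (in a suitable closed convex set $K$ of $L^2(0,T;W^{2,q}\cap H^2)$ with $\u_t\in L^2(0,T;H^1)$ and $\u(\cdot,0)=\u_0$), solve the continuity equation $\r_t+\na\cdot(\r\u)=0$ for $\r$ by the method of characteristics, obtaining $\r\in L^\infty(0,T;W^{1,q}\cap H^1)$ with $\r_t\in L^\infty(0,T;L^q)$ and — crucially — the lower and upper bounds $\alpha_1\le\r\le\beta_1$, which follow from integrating $\frac{d}{dt}\r$ along characteristics and using $\|\na\cdot\u\|_{L^\infty}\lesssim\|\u\|_{W^{2,q}}$ with $q>3$ so that $W^{2,q}\hookrightarrow W^{1,\infty}$. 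Next, with the same $\u$, solve the induction equation $\H_t=\na\times(\u\times\H)=\u\cdot\na\H-\H\cdot\na\u-\H(\na\cdot\u)$ — again a linear transport-type equation with no diffusion — for $\H\in L^\infty(0,T;W^{1,q}\cap H^1)$ with $\H_t\in L^\infty(0,T;L^q)$; the divergence-free constraint propagates from the data since $(\na\cdot\H)_t=0$ along the flow. Then, feeding these $\r$ and $\H$ into the momentum equation, I would solve the linear elliptic-parabolic problem
\[
\r\u_t+\r\u^{\mathrm{old}}\cdot\na\u-\mu\Delta\u-(\l+\mu)\na(\na\cdot\u)=-\na P(\r)+(\na\times\H)\times\H
\]
with initial data $\u_0$, using the generalized Lax–Milgram theorem mentioned in the abstract to get existence of the weak solution, then bootstrapping elliptic regularity (the operator $-\mu\Delta-(\l+\mu)\na\,\Dv$ is strictly elliptic) to obtain $\u\in L^2(0,T;W^{2,q}\cap H^2)$, $\u_t\in L^2(0,T;H^1)$. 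This defines a map $\u^{\mathrm{old}}\mapsto\u^{\mathrm{new}}$ on $K$.

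The second block of the argument is the a priori estimates showing that $K$ is invariant under this map for $\ov T=\ov T(r_0)$ small. Testing the momentum equation with $\u_t$ and with $\Delta\u$, using the lower bound on $\r$ to control the kinetic terms, and absorbing the forcing terms $\na P(\r)$ and $(\na\times\H)\times\H$ via the transport estimates for $\r$ and $\H$ (here one uses the algebra property of $W^{1,q}$, $q>3$, so that $\H\cdot\na\H\in L^q$ etc.), one derives a differential inequality of the form $\frac{d}{dt}\Phi+\text{(good terms)}\le C(\Phi)\cdot(\text{poly in }\Phi)$ where $\Phi$ collects the norms of $(\r,\u,\H)$. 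A continuation/Gronwall argument then yields a time $\ov T$, depending only on $r_0$, on which $\Phi$ stays bounded by a fixed constant, so $K$ is mapped into itself; along the way the constants $\alpha_1,\beta_1$ get pinned down.

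The third block is compactness and the fixed point: one checks that the map $\u^{\mathrm{old}}\mapsto\u^{\mathrm{new}}$ is continuous from $K$ (with a weaker topology, e.g. $L^2(0,T;H^1)$) into itself and that $K$ is compact in that topology via Aubin–Lions (using $\u\in L^2(W^{2,q})$, $\u_t\in L^2(H^1)$), so that a Schauder–Tychonoff fixed point $\u$ exists; the associated $(\r,\u,\H)$ then solves the nonlinear system \eqref{e1}. Uniqueness is obtained separately by an energy estimate on the difference of two solutions: subtract the equations, test appropriately, and use the transport structure together with the regularity of strong solutions and Gronwall to conclude. I expect the main obstacle to be the a priori estimate for $\H$: because the induction equation carries \emph{no} dissipation, the only available bound is the transport estimate in $W^{1,q}$, whose right-hand side contains $\|\na\u\|_{L^\infty}\|\H\|_{W^{1,q}}$, so $\|\H(t)\|_{W^{1,q}}$ can only be controlled by $\|\H_0\|_{W^{1,q}}\exp(\int_0^t\|\na\u\|_{L^\infty})$; closing the estimate therefore forces a careful balancing of the $W^{2,q}$-in-time-$L^2$ norm of $\u$ (only square-integrable in $t$, not bounded) against this exponential, and this is exactly where the short time $\ov T(r_0)$ and the choice of the weaker topology for the fixed point become delicate. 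The secondary difficulty is that $\r$ and $\H$ have the same low regularity as the coefficients in the elliptic problem for $\u$, so the Lax–Milgram/elliptic-regularity step must be carried out with nonsmooth coefficients, which is presumably what the "generalized Lax–Milgram theorem" is for.
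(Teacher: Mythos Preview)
Your proposal is correct and follows essentially the same architecture as the paper: solve the transport equations for $\r$ and $\H$ by characteristics/hyperbolic theory given a frozen velocity $\bar{\u}$, solve the linearized momentum equation via a Lax--Milgram argument followed by elliptic regularity, close the a~priori estimates on a short time $\ov T(r_0)$, apply Schauder--Tychonoff, and prove uniqueness by a Gronwall-type energy estimate on differences. Two small deviations are worth noting: the paper freezes the convection term completely as $\r\bar{\u}\cdot\nabla\bar{\u}$ on the right-hand side (rather than your half-linearized $\r\u^{\mathrm{old}}\cdot\nabla\u$), which makes the Lax--Milgram bilinear form cleaner; and the crucial estimate $\u_t\in L^2(0,T;H^1)$---needed to reach $\u\in L^2(0,T;W^{2,q})$---is obtained not by testing with $\Delta\u$ but by differentiating the momentum equation in $t$ and testing with $\u_t$, which you should make explicit.
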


In addition to the difficulties due to the presence of the magnetic
field and its interaction with the hydrodynamic motion in the MHD
flow of large oscillation, another major difficulty in proving the
existence is the lack of the dissipative estimates for the magnetic
field and the gradient of the density. Since we are concerned with
the {\em strong solutions in $W^{2,q}$} which have weaker regularity
than the {\em classical solution in $H^3$}, we need some new
techniques and estimates to establish the existence. We first
linearize \eqref{e1},   use the Lax-Milgram theorem to obtain the
solution to the linearized system, then
 apply the Schauder-Tychonoff fixed point theorem to obtain the
strong solution of \eqref{e1} with large data.

 This paper is organized as follows. Section 2, which is the main body of this
paper, is devoted to proving the local existence of the system
\eqref{e1} by Lax-Milgram theorem and a fixed-point argument.
Section 3 will focus on the uniqueness of the solution obtained in
Section 2.

\bigskip\bigskip
\section{Local Existence}

We use the letter $C$ to denote any
constant that can be explicitly computed in terms of known
quantities, and the exact value denoted by $C$ may therefore change
from line to line in a given computation. Similarly, $\varepsilon$ and $
\delta$ denote arbitrary positive constants, $C_{\varepsilon}$ and
$C_{\delta}$ denote correspondingly positive constants depending on
$\frac{1}{\varepsilon}$ and $\frac{1}{\delta}$.
 Set $\partial_i=\partial/{\partial x_i},\ i=1,2,3$ for $x=(x_1, x_2, x_3)\in\R^3$ .
If $A=((a_{ij}))\ \textrm{and}\  B=((b_{ij}))\  \textrm{are}\
3\times 3\ \textrm{matrices}$, then
$$A:B=\sum_{i=1}^3\sum_{j=1}^3 a_{ij}b_{ij}
\ \ \textrm{and} \ \
|A|=(A:A)^{\frac{1}{2}}=\big(\sum_{i=1}^3\sum_{j=1}^3
a_{ij}^2\big)^{\frac{1}{2}}.$$
$$|\u|_p=\textrm{norm in the space}\ L^p(\R^3)\
\textrm{with}\  p\geq1;$$
$$ \|\u\|_s=\textrm{norm in the Sobolev
space}\  H^s(\R^3)\ \textrm{of order} \ s \ \textrm{on} \
L^2(\R^3);$$
$$\|\u\|_{s,p}=\textrm{norm in the Sobolev space}\ W^{s,p}(\R^3)\
\textrm{of order}\  s \ \textrm{on} \ L^p(\R^3).$$
For a given $T>0$,  denote
$$Q_T:=\R^3\times[0,T].$$
Obviously,\ $L^p(0,T; L^p(\R^3))=L^p(Q_T)$.

In this section, we will prove the existence result in Theorem
\ref{T20}. For simplicity of notations, we set
$\mu=\beta=1$ and $\lambda=0$ without loss of generality. The proof
will proceed  through four steps by combining a generalized
Lax-Milgram theorem and a Schauder fixed point argument. To this
end, we consider first an auxiliary problem.

Set
\begin{equation*}
\begin{split}
\Phi=\big\{\phi(x,t)\in\R^3\mid\phi\in \big(L^2(0,T;
H^2(\R^3))\big)^3,\ \phi_t\in \big(L^2(Q_T)\big)^3\},
\end{split}
\end{equation*}
with the natural norm\ $\|{\bf\phi}\|_{\Phi}$.  And, for $q\in
(3,6]$,\ \ define
\begin{equation*}
\begin{split}
\Psi=\Phi&\cap \Big\{\phi(x,t)\in\R^3|\ \phi\in \big(L^2(0,T;
W^{2,q}(\R^3))\big)^3\cap \big(L^\infty(0,T;
H^2(\R^3))\big)^3\Big\}\\& \cap \Big\{\phi(x,t)\in\R^3|\ \phi\in
\big(L^2(0,T; H^2(\R^3))\big)^3, \ \phi_t \in
\big(L^\infty(0,T;L^2(\R^3))\big)^3, \\
&\qquad\qquad\qquad\qquad \nabla \phi_t\in \big(L^2(Q_T)\big)^9,\
\phi(0)=\u_0\Big\}.
\end{split}
\end{equation*}

Using the continuity equation \eqref{e11}, the momentum  equation
\eqref{e12} can be reduced to $$\r\u_t+\r\u\cdot\nabla \u+\nabla
P=(\nabla\times \H)\times \H+\triangle\u+\nabla(\nabla \cdot\u),$$
where the notation $\u\cdot\nabla\u$ is understood to be
$(\u\cdot\nabla)\u$. Meanwhile, under the constraint
$\nabla\cdot\H=0$, the induction equation is equivalent to
$$\H_t+{\bf{u}}\cdot\nabla\H=\big(\nabla{\bf{u}}-(\nabla\cdot{\bf{u}}){\bf
I}\big)\ \H,$$ where ${\bf I}$ stands for the $3\times3$ identity
matrix.

 We need to find a triplet $(\r,\u,\H)$ that satisfies the following auxiliary problem
\begin{subequations}\label{e3}
\begin{align}
&\r_t+\nabla \cdot(\r{\bf\bar{u}})=0, \label{e31}\\
&\r\u_t-\triangle\u-\nabla(\nabla \cdot\u)=-\r{\bf\bar{u}}\cdot\nabla {\bf \bar{u}}-\nabla P+(\nabla\times \H)\times \H,  \label{e32}\\
&\H_t+{\bf\bar{u}}\cdot\nabla\H=\big(\nabla{\bf\bar{u}}-(\nabla\cdot{\bf\bar{u}}){\bf
I}\big)\ \H, \ \nabla \cdot \H=0, \label{e33}
\end{align}
\end{subequations}
a.e. in $Q_T$ for any given $T>0$, with the initial condition \eqref{IC} such that
$\u\in\Psi,$
$$\r\in L^\infty\big(0,T;W^{1,q}(\R^3)\cap
H^1(\R^3)\big)\cap H^1\big(0,T;L^q(\R^3)\cap L^2(\R^3)\big),$$
$$\alpha_1\leq\r\leq\beta_1\ (\alpha_1,\beta_1\ \textrm{are suitable positive constants}),$$
and
$$\H\in \big(L^\infty(0,T;W^{1,q}(\R^3)\cap H^1(\R^3))\big)^3\cap
\big(H^1(0,T;L^q(\R^3)\cap L^2(\R^3))\big)^3.$$
where ${\bf\bar{u}}\in\Psi$,  and $\u_0\in
\big(H^2(\R^3)\big)^3$, $\r_0\in W^{1,q}(\R^3)\cap H^1(\R^3)$ with
$\alpha\leq\rho_0\leq 1$, $\H_0\in \big(W^{1,q}(\R^3)\big)^3\cap
\big(H^1(\R^3)\big)^3$ are given functions and $q\in(3, 6]$.

\subsection{Solvability of the density with a fixed velocity.}
Obviously,  the existence of a unique solution $\r:=\r(\bar{\u})$ of
the continuity equation  follows directly from the method of
characteristics. Although this method requires that ${\bf\bar{u}}\in
\big(C^0(0,T;C^1(\R^3))\big)^3$ and $\rho_0\in C^1(\R^3)$, the
estimates below in Lemma \ref{r} hold under the above assumptions on
${\bf\bar{u}}$ and $\rho_0$, i.e., ${\bf\bar{u}}\in\Psi,\ \r_0\in
W^{1,q}(\R^3)\cap H^1(\R^3)$. So we use, for simplicity, a formal
approach. The correct procedure is to consider a regularization of
${\bf\bar{u}}$ and $\rho_0$, and then to pass to limit (similar to
the argument for $\H$ below, and can also be found in Theorem 9.3 in
\cite {AI}).
\begin{Lemma}\label{r}
Under the same conditions as Theorem \ref{T20}, there is a unique
strictly positive function
$$\r:=\r(\bar{\u})\in H^1\big(0,T; L^q(\R^3)\cap L^2(\R^3)\big)\cap L^\infty\big(0,T; W^{1,q}(\R^3)\cap H^1(\R^3)\big)$$
which satisfies \eqref{e31}. Moreover, the density satisfies the
following estimate:
\begin{equation}\label{r1}
\|\nabla \r\|_{L^\infty(0,T; L^q(\R^3)\cap L^2(\R^3))}\le
\left(\|\r_0\|_{W^{1,q}(\R^3)\cap H^1(\R^3)}
+C\|\bar{\u}\|_\Psi\sqrt{T}\right)\exp\left(C\|\bar{\u}\|_\Psi\sqrt{T}\right).
\end{equation}
\end{Lemma}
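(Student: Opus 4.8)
The plan is to solve the linear transport equation \eqref{e31} for $\r$ with the given velocity $\bar{\u}\in\Psi$ by the method of characteristics, and then to derive the quantitative bound \eqref{r1} from Gronwall's inequality applied to the evolution of $\na\r$ along the flow. I would first set up the flow map $X(t;x)$ defined by $\frac{d}{dt}X(t;x)=\bar{\u}(X(t;x),t)$, $X(0;x)=x$; since $\bar{\u}\in\Psi\hookrightarrow (L^2(0,T;W^{2,q}))^3$ and $q>3$, the Sobolev embedding $W^{1,q}(\R^3)\hookrightarrow C^0(\R^3)$ (and $W^{2,q}\hookrightarrow C^1$) gives enough regularity of $\bar{\u}$ for the characteristics to be well-defined after the regularization described in the text. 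Along characteristics \eqref{e31} reads $\frac{d}{dt}\r(X(t;x),t)=-(\Dv\bar{\u})(X(t;x),t)\,\r(X(t;x),t)$, which integrates to $\r(X(t;x),t)=\r_0(x)\exp\!\big(-\int_0^t(\Dv\bar{\u})(X(s;x),s)\,ds\big)$. This immediately yields strict positivity and the two-sided bound $\alpha_1\le\r\le\beta_1$ with constants depending on $\alpha$, $\beta=1$, $T$ and $\|\bar{\u}\|_\Psi$ (using $\int_0^t\|\Dv\bar{\u}\|_{L^\infty}\,ds\le C\sqrt{T}\,\|\bar{\u}\|_{L^2(0,T;W^{2,q})}$ via $W^{1,q}\hookrightarrow L^\infty$ and Cauchy--Schwarz in $t$), and it gives $\r\in L^\infty(0,T;L^q\cap L^2)$ together with $\r_t=-\Dv(\r\bar{\u})\in L^?$ as claimed.

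Next I would estimate $\na\r$. Differentiating \eqref{e31} in $x_i$ gives $\del_t\del_i\r+\bar{u}^{(k)}\del_k\del_i\r = -\del_i\bar{u}^{(k)}\del_k\r -\del_i(\Dv\bar{\u})\,\r-\Dv\bar{\u}\,\del_i\r$, i.e. along characteristics $\frac{d}{dt}(\na\r\circ X)$ is bounded by $C\|\na\bar{\u}\|_{L^\infty}\,|\na\r| + C|\na^2\bar{\u}|\,\r$. Taking the $L^q\cap L^2$ norm, using the change of variables $X(t;\cdot)$ (whose Jacobian is controlled two-sided by $\exp(\pm\int_0^t\|\Dv\bar{\u}\|_\infty)$), and using $\|\na\bar{\u}(t)\|_{L^\infty}\le C\|\bar{\u}(t)\|_{W^{2,q}}$ and the bound on $\r$, one obtains
\be
\f{d}{dt}\|\na\r(t)\|_{L^q\cap L^2}\le C\|\bar{\u}(t)\|_{W^{2,q}}\,\|\na\r(t)\|_{L^q\cap L^2}+C\|\bar{\u}(t)\|_{W^{2,q}}.
\en
Gronwall's lemma then gives $\|\na\r(t)\|_{L^q\cap L^2}\le\big(\|\r_0\|_{W^{1,q}\cap H^1}+C\int_0^t\|\bar{\u}\|_{W^{2,q}}\big)\exp\big(C\int_0^t\|\bar{\u}\|_{W^{2,q}}\big)$, and bounding $\int_0^t\|\bar{\u}\|_{W^{2,q}}\le\sqrt{T}\,\|\bar{\u}\|_{L^2(0,T;W^{2,q})}\le C\sqrt{T}\,\|\bar{\u}\|_\Psi$ yields exactly \eqref{r1}. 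The bound on $\r_t$ in $L^\infty(0,T;L^q\cap L^2)$ follows from $\r_t=-\bar{\u}\cdot\na\r-\r\,\Dv\bar{\u}$ together with the estimates just obtained and, for the $W^{2,q}$ norm of $\bar{\u}$, the fact that $\bar{\u}\in L^\infty(0,T;H^2)$ controls the $L^2$ part while one has only $L^2$-in-time control of the $L^q$ part — so strictly the $L^q$ bound on $\r_t$ should be read in the $L^2(0,T;L^q)$ sense coming from $\bar u\in L^2(0,T;W^{2,q})$, consistent with the $H^1(0,T;L^q)$ regularity asserted. For uniqueness, the difference of two solutions with the same data and velocity satisfies a linear transport equation with zero initial data, and the energy estimate (multiply by $\r$, integrate, Gronwall) forces it to vanish.

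The main obstacle is the rigorous justification: the characteristic argument as written needs $\bar{\u}\in C^0(0,T;C^1)$ and $\r_0\in C^1$, which is not quite what $\Psi$ and the hypotheses give (we only have $\bar{\u}\in L^2(0,T;W^{2,q})\cap L^\infty(0,T;H^2)$, so $\bar u(\cdot,t)\in C^1$ for a.e.\ $t$ but with only $L^2$-in-time control of the $C^1$ norm). The correct procedure, as the text indicates, is to mollify $\bar{\u}$ and $\r_0$ to get smooth approximations $\bar{\u}^\varepsilon,\r_0^\varepsilon$, solve by characteristics, derive the estimates above with constants \emph{uniform in $\varepsilon$} (this works because all bounds only involve $\|\bar{\u}^\varepsilon\|_\Psi\le C\|\bar{\u}\|_\Psi$ and $\|\r_0^\varepsilon\|_{W^{1,q}\cap H^1}\le C\|\r_0\|_{W^{1,q}\cap H^1}$), and then pass to the limit: the uniform bounds give weak-$*$ limits in the stated spaces, the transport equation passes to the limit since $\bar{\u}^\varepsilon\to\bar{\u}$ strongly in, say, $L^2(Q_T)$ and $\r^\varepsilon\rightharpoonup\r$ weak-$*$ suffices for the product $\r^\varepsilon\bar{\u}^\varepsilon$ by a standard compactness/Aubin--Lions argument, and uniqueness (proved as above) guarantees the limit is independent of the mollification. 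Lower semicontinuity of norms under weak-$*$ convergence then preserves \eqref{r1} and the two-sided bound $\alpha_1\le\r\le\beta_1$ for the limit.
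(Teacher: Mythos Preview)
Your proposal is correct and follows essentially the same route as the paper: existence and strict positivity via characteristics, then differentiate the transport equation and apply Gronwall to bound $\|\nabla\r\|_{L^q\cap L^2}$, with the $\sqrt{T}$ factor coming from Cauchy--Schwarz in time on $\int_0^t\|\bar{\u}\|_{W^{2,q}}$. The only cosmetic difference is that the paper obtains the differential inequality for $|\nabla\r|_q$ by the energy method (multiplying the differentiated equation by $|\nabla\r|^{q-2}\nabla\r$, integrating over $\R^3$, and using the divergence theorem on the transport term) rather than by estimating pointwise along characteristics and controlling the Jacobian, but the resulting inequality and the regularization/limit argument are the same.
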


\begin{proof}
Along characteristics
\begin{equation*}
\begin{cases}
\frac{dX}{dt}={\bf\bar{u}}(t,X),\\ X(t)=x,
\end{cases}
\end{equation*}
 \eqref{e31} can be
rewritten as
$$\frac{d}{dt}\r(t,X(t))=-\r(t,X(t))\nabla\cdot{\bf\bar{u}}(t,X(t)).$$
Then, the explicit formula for $\r$ is
\begin{equation*}
\r(t,x)=\r_0(X(0))\textrm{exp}\left(-\int_0^t\nabla\cdot{\bf\bar{u}}(\tau,X(\tau))\
d\tau\right).
\end{equation*}
 It follows that
\begin{equation}\label{2}
\alpha\,
\textrm{exp}\left(-\int_0^t|\nabla\cdot{\bf\bar{u}}(\tau,X(\tau))|_\infty
d\tau\right)\leq\r(t,x)\leq
\textrm{exp}\left(\int_0^t|\nabla\cdot{\bf\bar{u}}(\tau,X(\tau))|_\infty
d\tau\right).
\end{equation}

 Now applying the gradient operator $\nabla$ to
\eqref{e31}, and using
\begin{equation*}
\nabla(\nabla\r\cdot\bar{\u})
=\bar{\u}\cdot\nabla(\nabla\r)+\nabla\bar{\u}\cdot\nabla\r,
\end{equation*}
where $\nabla\bar{\u}\cdot\nabla\r=(\nabla\bar{\u})^\top\nabla\r$,
 we have
\begin{equation}\label{22}
\nabla\r_t+{\bf\bar{u}}\cdot\nabla (\nabla\rho)+\nabla
{\bf\bar{u}}\cdot\nabla\r+\r\nabla(\nabla\cdot{\bf\bar{u}})+\nabla\cdot{\bf\bar{u}}\nabla\r=0.
\end{equation}
Multiplying \eqref{22} by $|\nabla\rho|^{q-2}\nabla\rho$ with
$3<q\leq 6$ and integrating over $\R^3$, we get
\begin{equation*}
\begin{split}
&\frac{1}{q}\frac{d}{dt}|\nabla\r|_q^q+\int_{\R^3}\Big(\frac{1}{q}{\bf\bar{u}}\cdot\nabla(|\nabla\r|^q)
+|\nabla\r|^{q-2}\nabla\r\cdot(\nabla{\bf\bar{u}}\cdot\nabla\r)\\
&\quad\quad\quad\qquad\qquad+\r|\nabla\r|^{q-2}\nabla\r\cdot\nabla(\nabla\cdot{\bf\bar{u}})
+|\nabla\r|^q\nabla\cdot{\bf\bar{u}}\Big)\ dx=0.
\end{split}
\end{equation*}
Bearing in mind, by divergence theorem,
\begin{equation*}
\begin{split}
\int_{\R^3}({\bf\bar{u}}\cdot\nabla(|\nabla\r|^q)+|\nabla\r|^q
\nabla\cdot{\bf\bar{u}})\
dx&=\int_{\R^3}\nabla\cdot(|\nabla\r|^q{\bf\bar{u}})\ dx=0,
\end{split}
\end{equation*}
we find, by H\"{o}der's inequality,
\begin{equation*}
\begin{split}
\frac{1}{q}\frac{d}{dt}|\nabla\r|_q^q&\leq
|\nabla\r|_q^{q-1}(|\r|_\infty|\nabla(\nabla\cdot{\bf\bar{u}})|_q+|\nabla\r|_q|\nabla{\bf\bar{u}}|_\infty)+
(\frac{1}{q}-1)\int_{\R^3}|\nabla\r|^q\nabla\cdot{\bf\bar{u}}\ dx
\\&\leq |\nabla\r|_q^{q-1}(|\r|_\infty|\nabla(\nabla\cdot{\bf\bar{u}})|_q+|\nabla\r|_q|\nabla{\bf\bar{u}}|_\infty)+|\nabla\r|_q^q|\nabla\cdot{\bf\bar{u}}|_\infty \\
&\leq C|\nabla\r|^q_q|\nabla
{\bf\bar{u}}|_\infty+|\nabla\r|_q^{q-1}|\r|_\infty|\nabla(\nabla\cdot{\bf\bar{u}})|_q.
\end{split}
\end{equation*}
Since
\begin{equation}\label{im}
W^{1,q}(\R^3)\hookrightarrow L^\infty(\R^3)\quad \ \textrm{as}\quad
3<q\leq 6,
\end{equation}
then we have
\begin{equation*}
\begin{split}
\frac{d}{dt}|\nabla\r|_q&\leq C|\nabla\r|_q|\nabla
{\bf\bar{u}}|_\infty+|\r|_\infty|\nabla(\nabla\cdot{\bf\bar{u}})|_q\\
&\leq
C\|\bar{\u}\|_{2,q}|\nabla\r|_q+|\r|_\infty|\nabla(\nabla\cdot{\bf\bar{u}})|_q,
\end{split}
\end{equation*}
and, by Gronwall's inequality,
\begin{equation}\label{20}
|\nabla\r|_q\leq \textrm{exp}\left(C\int_0^t\|{\bf\bar{u}}\|_{2,q}
d\tau)\big(|\nabla \r_0|_q+\int_0^t|\r|_\infty|\nabla(\nabla
\cdot{\bf\bar{u}})|_q d\tau\right).
\end{equation}
Finally, \eqref{r1} follows from \eqref{20} and H\"{o}der's inequality. The proof is complete.

\end{proof}

\subsection{Solvability of the magnetic field with a fixed velocity.}

Due to the hyperbolic structure of \eqref{e33} in terms of the
magnetic field $\H:=\H(\bar{\u})$ with the fixed velocity
$\bar{\u}$, we can solve $\H$ through the following Lemma \ref{l1}.

Let $A_j(x, t)$, $j=1, ..., n$,  be symmetric $m\times m$ matrices
in $\R^n\times(0,T)$, $f(x,t)$ and $V_0(x)$ two $m$-dimensional
vector functions defined in $\R^n\times(0,T)$ and $\R^n$,
respectively.
For the Cauchy problem of the linear system in
$V\in\R^m\times(0,T)$:
\begin{equation}\label{1000}
\begin{cases}
&\displaystyle V_t+\sum_{i=1}^nA_j(x,t)\partial_jV+B(x,t)V=f(x,t),\\
&V(x,0)=V_0(x),
\end{cases}
\end{equation}
we have
\begin{Lemma}\label{l1}
Assume that $$A_j\in \big(C(0,T; H^s(\R^n))\cap C^1(0,T;
H^{s-1}(\R^{n}))\big)^{m\times m}, \;  j=1,...,n,$$
$$B\in \big(C(0,T; H^{s-1}(\R^n))\big)^{m\times m},\quad f\in \big(C(0,T; H^s(\R^n))\big)^m,\quad V_0\in \big(H^s(\R^n)\big)^m,$$
with $s>\f{n}{2}+1$  an integer. Then there exists a unique solution
to \eqref{1000}, i.e, a function
$$V\in \big(C([0,T), H^s(\R^n))\cap C^1((0,T), H^{s-1}(\R^n))\big)^m$$
satisfying \eqref{1000} pointwise ( i.e. in the classical sense).
\end{Lemma}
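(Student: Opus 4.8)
\noindent\emph{Sketch of the intended proof.}
The plan is to carry out the classical Friedrichs--Kato argument for linear symmetric hyperbolic systems, with the regularity bookkeeping tuned to the hypotheses. The heart of the matter is an a priori energy estimate in $H^s$. For a sufficiently smooth solution $V$ of \eqref{1000}, apply $\partial^\alpha$ with $|\alpha|\le s$, take the $L^2(\R^n)$ inner product with $\partial^\alpha V$, and sum. The top-order contribution $\sum_j\int_{\R^n}A_j\,\partial_j\partial^\alpha V\cdot\partial^\alpha V\,dx$ is integrated by parts; by the \emph{symmetry} of $A_j$ it equals $-\tfrac12\sum_j\int_{\R^n}(\partial_jA_j)\,\partial^\alpha V\cdot\partial^\alpha V\,dx$, bounded by $C\sum_j\|\partial_jA_j\|_{L^\infty}\|V\|_s^2\le C\sum_j\|A_j\|_s\|V\|_s^2$ since $s-1>\frac{n}{2}$. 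The commutator $[\partial^\alpha,A_j]\partial_jV$ is controlled by the Moser/Kato--Ponce calculus inequality $\|[\partial^\alpha,A_j]\partial_jV\|_{L^2}\le C\big(\|\na A_j\|_{L^\infty}\|V\|_s+\|A_j\|_s\|\na V\|_{L^\infty}\big)\le C\|A_j\|_s\|V\|_s$ (again using $s>\frac{n}{2}+1$), while the terms with $B$ and $f$ are estimated directly using that $H^{s-1}$ is a Banach algebra. Collecting everything gives
$$\frac{d}{dt}\|V(\cdot,t)\|_s^2\le C(t)\big(\|V(\cdot,t)\|_s^2+\|f(\cdot,t)\|_s^2\big),\qquad C(t)=C\sum_{j}\|A_j(\cdot,t)\|_s+C\|B(\cdot,t)\|_{s-1},$$
and Gronwall's inequality yields $\sup_{[0,T]}\|V\|_s\le\Lambda$ depending only on $\|V_0\|_s$, $\sup_{[0,T]}\|f\|_s$ and $\sup_{[0,T]}\big(\sum_j\|A_j\|_s+\|B\|_{s-1}\big)$.

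To construct the solution I would regularize \eqref{1000} by artificial viscosity: for each $\varepsilon>0$ solve the linear parabolic Cauchy problem $V^\varepsilon_t+\sum_{j}A_j\,\partial_jV^\varepsilon+BV^\varepsilon=f+\varepsilon\,\Delta V^\varepsilon$, $V^\varepsilon(\cdot,0)=V_0$, which is solvable in $C([0,T];H^s)$ by standard parabolic theory (Duhamel's formula plus a contraction on a short interval, extended globally via the a priori bound; alternatively one may mollify the $A_j$ by a Friedrichs mollifier or use a Galerkin scheme). The viscous term contributes $-\varepsilon\sum_{|\alpha|\le s}\|\na\partial^\alpha V^\varepsilon\|_{L^2}^2\le0$ to the energy identity, so the estimate above holds for $V^\varepsilon$ \emph{uniformly in} $\varepsilon$; hence $\{V^\varepsilon\}$ is bounded in $L^\infty(0,T;H^s)$ and, reading $V^\varepsilon_t$ off the equation, $\{V^\varepsilon_t\}$ is bounded in $L^\infty(0,T;H^{s-1})$ (with $\varepsilon\Delta V^\varepsilon=O(\varepsilon)$ in $L^\infty(0,T;H^{s-2})$). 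Since the system is linear, weak-$*$ convergence of a subsequence in $L^\infty(0,T;H^s)$ together with this bound on the time derivatives suffices to pass to the limit in every term (multiplication by the fixed coefficients $A_j,B$ is weakly continuous), so the limit $V\in L^\infty(0,T;H^s)$ with $V_t\in L^\infty(0,T;H^{s-1})$ solves \eqref{1000}; because $s>\frac{n}{2}+1$ gives $H^s\hookrightarrow C^1(\R^n)$ and $H^{s-1}\hookrightarrow C^0(\R^n)$, the identity holds pointwise.

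Uniqueness follows by applying the energy estimate at the $L^2$ level to the difference $W$ of two solutions, which solves \eqref{1000} with $f=0$, $V_0=0$: one gets $\frac{d}{dt}\|W\|_{L^2}^2\le C(t)\|W\|_{L^2}^2$, hence $W\equiv0$. It remains to upgrade the time regularity from $V\in C_w(0,T;H^s)\cap C(0,T;H^{s-1})$ to $V\in C([0,T);H^s)$: weak continuity gives lower semicontinuity of $t\mapsto\|V(t)\|_s$, while running the a priori estimate forward and backward from an arbitrary base time $t_0$ (the energy inequality is reversible and $C(t)$ is integrable), together with the continuous dependence of the bound on the data, gives the matching upper semicontinuity; thus $t\mapsto\|V(t)\|_s$ is continuous, which combined with weak continuity yields strong continuity in $H^s$ (the standard Majda/Kato--Taylor argument). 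Then $V_t=f-\sum_jA_j\partial_jV-BV\in C((0,T);H^{s-1})$ since $V\in C(H^s)$, $A_j\in C(H^s)$, $B\in C(H^{s-1})$ and $H^{s-1}$ is an algebra, so $V\in C^1((0,T);H^{s-1})$ and $V$ is a classical solution. I expect the delicate points to be the uniform-in-$\varepsilon$ $H^s$ energy estimate with its commutator terms at the borderline coefficient regularity $s>\frac{n}{2}+1$, and the final upgrade of weak to strong continuity in $H^s$.
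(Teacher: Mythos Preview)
Your sketch is correct and follows the classical Friedrichs--Kato route for linear symmetric hyperbolic systems: $H^s$ energy estimate via symmetry plus Moser/Kato--Ponce commutator bounds, existence through a vanishing-viscosity (or mollifier) approximation with uniform-in-$\varepsilon$ bounds, $L^2$ energy for uniqueness, and the Majda--Kato upgrade from weak to strong continuity in $H^s$. The bookkeeping with $s>\frac{n}{2}+1$ so that $H^{s-1}$ is an algebra and $\na A_j\in L^\infty$ is exactly what is needed to close the commutator terms.

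The only ``difference'' from the paper is that the paper does not actually carry out any of this: its entire proof of Lemma~\ref{l1} is the one-line citation ``This lemma is a direct consequence of Theorem~2.16 in \cite{AI} with $A_0(x,t)={\bf I}$.'' What you have written is, in effect, a sketch of the proof of that cited theorem in the special case $A_0=\mathbf{I}$. So your approach is not an alternative route but rather the substance behind the reference the authors invoke; it buys self-containment at the cost of length, while the paper's approach buys brevity by deferring to \cite{AI}.
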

\begin{proof}
This lemma is a direct consequence of Theorem 2.16 in \cite{AI} with
$A_0(x,t)={\bf I}$.
\end{proof}

Now, taking advantage of Lemma \ref{l1}, we have
\begin{Lemma}\label{l2}
Under the same conditions as Theorem \ref{T20}, there is a unique
function
$$\H:=\H(\bar{\u})\in \big(H^1(0,T; L^q(\R^3)\cap L^2(\R^3))\big)^3\cap \big(L^\infty(0,T; W^{1,q}(\R^3)\cap H^1(\R^3))\big)^3$$
which satisfies the equation \eqref{e33}. Moreover, the magnetic
field satisfies
$$\|\H\|_{L^\infty(0,T; W^{1,q}(\R^3)\cap H^1(\R^3))}\le \left(\|
\H_0\|_{W^{1,q}(\R^3)\cap H^1(\R^3)
}+C\|\bar{\u}\|_\Psi\sqrt{T}\right)\exp(C\|\bar{\u}\|_\Psi\sqrt{T}).$$
\end{Lemma}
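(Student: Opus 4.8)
The plan is to solve \eqref{e33} by appealing to Lemma \ref{l1}, obtaining first a smooth solution for regularized data and then passing to the limit, and to derive the stated estimate by an energy method applied directly to \eqref{e33} and its spatial derivative. Note that \eqref{e33} is a symmetric hyperbolic system for $\H$ of exactly the form \eqref{1000}: we write it as $\H_t+\sum_{j=1}^3 \bar u^{(j)}\partial_j\H + B(x,t)\H = 0$ with $A_j = \bar u^{(j)}{\bf I}$ (trivially symmetric) and $B = (\nabla\cdot\bar{\u}){\bf I}-(\nabla\bar{\u})^\top$, and $f=0$. Since $\bar{\u}\in\Psi$ only gives $L^2(0,T;W^{2,q})\cap L^\infty(0,T;H^2)$, the coefficients are not smooth enough to apply Lemma \ref{l1} directly; so first I would mollify, replacing $\bar{\u}$ by $\bar{\u}_\varepsilon = \bar{\u}*\eta_\varepsilon$ (in space) and $\H_0$ by $\H_{0,\varepsilon}$, which are smooth in $x$ and still regular enough in $t$ for the hypotheses of Lemma \ref{l1} with $n=3$, $m=3$, $s=2$ (so $s>n/2+1$). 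This yields a classical solution $\H_\varepsilon\in C([0,T);H^2)\cap C^1((0,T);H^1)$.

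Next I would establish $\varepsilon$-uniform estimates. Multiply \eqref{e33} (with $\bar{\u}_\varepsilon$) by $|\H_\varepsilon|^{p-2}\H_\varepsilon$ for $p=q$ and $p=2$ and integrate over $\R^3$; the convection term $\bar{\u}_\varepsilon\cdot\nabla\H_\varepsilon$ contributes $\int \bar{\u}_\varepsilon\cdot\nabla(|\H_\varepsilon|^p/p) = -\frac1p\int |\H_\varepsilon|^p\,\nabla\cdot\bar{\u}_\varepsilon$, which is controlled by $|\nabla\bar{\u}_\varepsilon|_\infty |\H_\varepsilon|_p^p$, and the right side gives $\int (\nabla\bar{\u}_\varepsilon - (\nabla\cdot\bar{\u}_\varepsilon){\bf I})\H_\varepsilon \cdot |\H_\varepsilon|^{p-2}\H_\varepsilon \le C|\nabla\bar{\u}_\varepsilon|_\infty |\H_\varepsilon|_p^p$. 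Hence $\frac{d}{dt}|\H_\varepsilon|_p \le C|\nabla\bar{\u}_\varepsilon|_\infty |\H_\varepsilon|_p$. Then apply $\nabla$ to \eqref{e33} (exactly as in the proof of Lemma \ref{r}): one obtains $\nabla\H_{\varepsilon,t} + \bar{\u}_\varepsilon\cdot\nabla(\nabla\H_\varepsilon) + (\text{terms linear in }\nabla\H_\varepsilon\text{ with coefficient }\nabla\bar{\u}_\varepsilon) = (\text{terms with }\nabla^2\bar{\u}_\varepsilon\text{ times }\H_\varepsilon)$; multiplying by $|\nabla\H_\varepsilon|^{p-2}\nabla\H_\varepsilon$, using the same divergence-theorem cancellation on the transport term, the embedding \eqref{im}, and H\"older's inequality yields
\begin{equation*}
\frac{d}{dt}|\nabla\H_\varepsilon|_p \le C\|\bar{\u}_\varepsilon\|_{2,q}\,|\nabla\H_\varepsilon|_p + C\|\bar{\u}_\varepsilon\|_{2,q}\,|\H_\varepsilon|_\infty
\end{equation*}
for $p=q$, and similarly (with an $H^2\hookrightarrow L^\infty$ bound on $\bar{\u}_\varepsilon$) for $p=2$. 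Adding these and using Gronwall's inequality with $\int_0^t\|\bar{\u}_\varepsilon\|_{2,q}\,d\tau \le \sqrt{T}\,\|\bar{\u}_\varepsilon\|_{L^2(0,T;W^{2,q})}\le C\sqrt{T}\|\bar{\u}\|_\Psi$ (mollification does not increase these norms) produces the claimed bound
\begin{equation*}
\|\H_\varepsilon\|_{L^\infty(0,T;W^{1,q}\cap H^1)} \le \big(\|\H_0\|_{W^{1,q}\cap H^1} + C\|\bar{\u}\|_\Psi\sqrt{T}\big)\exp\big(C\|\bar{\u}\|_\Psi\sqrt{T}\big),
\end{equation*}
uniformly in $\varepsilon$. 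The bound on $\H_{\varepsilon,t}$ in $L^\infty(0,T;L^q\cap L^2)$ then follows by reading it off from \eqref{e33}: $\H_{\varepsilon,t} = -\bar{\u}_\varepsilon\cdot\nabla\H_\varepsilon + (\nabla\bar{\u}_\varepsilon - (\nabla\cdot\bar{\u}_\varepsilon){\bf I})\H_\varepsilon$, so $|\H_{\varepsilon,t}|_q \le |\bar{\u}_\varepsilon|_\infty|\nabla\H_\varepsilon|_q + C|\nabla\bar{\u}_\varepsilon|_q|\H_\varepsilon|_\infty \le C\|\bar{\u}_\varepsilon\|_{2,q}\|\H_\varepsilon\|_{W^{1,q}}$, which is finite (though this term is only $L^2$ in $t$ in general; one uses $\|\bar u\|_\Psi$ including the $L^\infty_t H^2$ part to get the stated $L^\infty_t$ control, as is done here since $H^2\hookrightarrow W^{1,q}$ for $q\le 6$).

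Finally I would pass to the limit $\varepsilon\to 0$. The uniform estimates give, along a subsequence, weak-$*$ convergence of $\H_\varepsilon$ in $L^\infty(0,T;W^{1,q}\cap H^1)$ and of $\H_{\varepsilon,t}$ in $L^\infty(0,T;L^q\cap L^2)$; by the Aubin--Lions lemma $\H_\varepsilon\to\H$ strongly in $C([0,T];L^r_{loc})$ for suitable $r$, and since $\bar{\u}_\varepsilon\to\bar{\u}$ strongly enough, every term in \eqref{e33} passes to the limit (the products $\bar{\u}_\varepsilon\cdot\nabla\H_\varepsilon$ and $\nabla\bar{\u}_\varepsilon\,\H_\varepsilon$ converge in the sense of distributions, being products of a weakly and a strongly converging factor), so the limit $\H$ satisfies \eqref{e33} a.e., carries the same estimate by weak lower semicontinuity of the norm, and attains $\H_0$ at $t=0$; the divergence constraint $\nabla\cdot\H=0$ is preserved since $\partial_t(\nabla\cdot\H)=0$ along \eqref{e33} whenever $\nabla\cdot\H_0=0$. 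Uniqueness follows from the linear energy estimate: the difference of two solutions solves \eqref{e33} with zero initial data, and $\frac{d}{dt}(|\H_1-\H_2|_2^2)\le C|\nabla\bar{\u}|_\infty|\H_1-\H_2|_2^2$ forces $\H_1=\H_2$ by Gronwall. The main obstacle is the limited regularity of $\bar{\u}$ (only $L^2_t W^{2,q}\cap L^\infty_t H^2$, with $\nabla\bar{\u}$ then in $L^2_t L^\infty$ via \eqref{im} but not in $L^\infty_t L^\infty$): one must be careful that the Gronwall factors involve only $\int_0^t\|\bar{\u}\|_{2,q}\,d\tau$, which is $O(\sqrt T)$ by Cauchy--Schwarz, and that the forcing-type term $\|\bar{\u}\|_{2,q}|\H|_\infty$ is integrable in $t$; this is exactly what gives the $\sqrt T$ smallness in the estimate and is the reason the final bound has the stated form.
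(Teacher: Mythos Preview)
Your overall strategy---regularize the coefficients, apply Lemma~\ref{l1}, derive $\varepsilon$-uniform $L^p$ and $W^{1,q}$ energy estimates exactly as in Lemma~\ref{r}, and pass to the limit---is the same as the paper's, and the energy computations you describe are correct and match the paper's. However, there are two genuine gaps in your invocation of Lemma~\ref{l1}.

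First, the choice $s=2$ does not satisfy the hypothesis $s>\tfrac{n}{2}+1=\tfrac{5}{2}$; you need $s\ge 3$. Second, and more seriously, mollifying $\bar{\u}$ \emph{only in space} does not yield the time regularity $A_j\in C^1(0,T;H^{s-1})$ required by Lemma~\ref{l1}: from $\bar{\u}_t\in L^\infty(0,T;L^2)$ you get, after spatial convolution, that $\bar{\u}_\varepsilon$ is Lipschitz in $t$ with values in any $H^s$, but Lipschitz is not $C^1$. The paper handles both issues at once by approximating $\bar{\u}$ not by a mollification but by a sequence $\bar{\u}_n\in \big(C^1(0,T;C_0^\infty(\R^3))\big)^3$ dense in $\Psi$ (and $\H_{0,n}\in C_0^\infty$); with such coefficients Lemma~\ref{l1} applies for every integer $s\ge 3$, giving $\H_n\in C^1(0,T;C^\infty(\R^3))$. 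Once you make this correction, your energy estimates and limit argument go through unchanged and coincide with the paper's proof; your additional remarks on uniqueness and on the preservation of $\nabla\cdot\H=0$ are correct but not included in the paper's proof of this lemma.
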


\begin{proof}
Assume that $\bar{\u}\in \big(C^1(0,T;C_0^\infty(\R^3))\big)^3$ and
$ \H_0\in \big(C_0^\infty(\R^3)\big)^3$. Rewriting \eqref{e33} in
the component form \big($\bar{\u}=(\bar{\u}^{(1)}, \bar{\u}^{(2)},
\bar{\u}^{(3)}), \ \H=(\H^{(1)}, \H^{(2)}, \H^{(3)})$\big),
employing the summation convention on the repeated indices, we have
$$\H_t^{(i)}+\bar{\u}^{(j)}\partial_j{\H}^{(i)}=\big(\partial_j \bar{\u}^{(i)}-(\nabla\cdot{\bf\bar{u}})\delta_{ij}\big)\ \H^{(j)},\ \ i=1,2,3.$$
i.e,  $$\H_t+(\bar{\u}^{(j)}{\bf I})\
\partial_j{\H}+((\nabla\cdot{\bf\bar{u}}){\bf I}-\nabla{\bf\bar{u}})\ \H=0.$$

Clearly, $A_j(x,t)=\bar{\u}^{(j)}(x,t) {\bf I}$, $ j=1,2, 3,\
B(x,t)=(\nabla\cdot{\bf\bar{u}}){\bf I}-\nabla{\bf\bar{u}}$ \ and
$f(x,t)=0$ satisfy the assumptions in Lemma \ref{l1}, then we get a
unique solution
$$\H\in \bigcap_{s=3}^\infty\big\{\big(C^1(0,T, H^{s-1}(\R^3))\cap C(0,T; H^s(\R^3))\big)^3\big\},$$
which implies, by the Sobolev imbedding theorems,
$$\H\in \bigcap_{k=1}^\infty \big(C^1(0,T; C^k(\R^3))\big)^3=\big(C^1(0,T; C^\infty(\R^3))\big)^3.$$

 For ${\bf\bar{u}}\in \Psi$, $\H_0\in \big(W^{1,q}(\R^3)\big)^3\cap
\big(H^1(\R^3)\big)^3,$ by an argument of dense set, there are
 $\{{\bf\bar{u}}_n\}_{n=1}^\infty\subset
\big(C^1(0,T;C_0^\infty(\R^3))\big)^3$,\
$\{{\H_0}_n\}_{n=1}^\infty\subset \big(C_0^\infty(\R^3)\big)^3$
respectively such that
$${\bf\bar{u}}_n\rightarrow {\bf\bar{u}}\ \ \textrm{in}\ \Psi,$$
$${\H_0}_n\rightarrow \H_0 \ \ \textrm{in}\
\big(W^{1,q}(\R^3)\big)^3\cap \big(H^1(\R^3)\big)^3.$$
  Hence,
$${\bf\bar{u}}_n\rightarrow {\bf\bar{u}}\ \textrm{in}\  \big(C([0,T]\times
B(0,a))\big)^3,$$ for any $a>0$ and $B(0,a)$ denotes the ball with
radius $a$ and centered at the origin.

 Similarly, there are
$\{\H_n\}_{n=1}^\infty\subset\big(C^1(0,T; C^\infty(\R^3))\big)^3$
satisfying
\begin{equation}\label{21}
{\H_n}_t+{\bf\bar{u}}_n\cdot\nabla
\H_n=\big(\nabla\bar{\u}_n-(\nabla\cdot\bar{\u}_n){\bf I} \big) \H_n
\end{equation}
with $\H_n(0)={\H_0}_n$.
Multiplying \eqref{21} by $|\H_n|^{p-2}\H_n\ (p\geq 2) $,
integrating over $\R^3$, by integration by parts, we obtain,
\begin{equation*}
\begin{split}
\f{1}{p}\f{d}{dt}|\H_n|_p^p&=-\f{1}{p}\int_{\R^3}
{\bf\bar{u}}_n\cdot\nabla (|\H_n|^p) dx
+\int_{\R^3}|\H_n|^{p-2}\H_n\cdot\big(\nabla\bar{\u}_n-(\nabla\cdot\bar{\u}_n){\bf I} \big)\ \H_n dx\\
&=\f{1}{p}\int_{\R^3} |\H_n|^p\nabla \cdot\bar{\u}_n dx
+\int_{\R^3}|\H_n|^{p-2}\H_n\cdot\big(\nabla\bar{\u}_n-(\nabla\cdot\bar{\u}_n){\bf I} \big)\ \H_n dx\\
&\le C|\H_n|_p^p|\nabla{\bf\bar{u}}_n|_\infty,
\end{split}
\end{equation*}
where $\H_n\cdot\big(\nabla\bar{\u}_n-(\nabla\cdot\bar{\u}_n){\bf I}
\big)=\H_n^\top\big(\nabla\bar{\u}_n-(\nabla\cdot\bar{\u}_n){\bf I}
\big).$
Then, by Gronwall's inequality and the imbedding \eqref{im}, we get
\begin{equation*}
\begin{split}
|\H_n|_p^p \le \textrm{exp}\left(C\int_0^t|\nabla
{\bf\bar{u}}_n|_\infty
d\tau\right)|\H_n(0)|_p^p
\le\textrm{exp}\left(C\int_0^t\|
{\bf\bar{u}}_n\|_{2,q}d\tau\right)|{\H_0}_n|_p^p.
\end{split}
\end{equation*}
i.e.,
$$|\H_n|_p\le \textrm{exp}\left(C\int_0^t\|
{\bf\bar{u}}_n\|_{2,q}d\tau\right)|{\H_0}_n|_p.$$
 Thus, by H\"{o}lder's inequality, we have
\begin{equation}\label{n}
\|\H_n\|_{L^\infty_T(L^p(\R^3))}\le \textrm{exp}(C\
\|\bar{\u}_n\|_\Psi\sqrt{T})|{\H_0}_n|_p.
\end{equation}
In particular, if we choose $p=q$ in \eqref{n}, then
\begin{equation*}
\begin{split}
\|\H_n\|_{L^\infty_T(L^q(\R^3))}\le \textrm{exp}(C\
\|\bar{\u}_n\|_\Psi\sqrt{T})|{\H_0}_n|_q
<\infty,
\end{split}
\end{equation*}
 and, up to a
subsequence, assuming that $\{{\H}_n\}_{n=1}^\infty$ were chosen so
that
$$\H_n\rightarrow \H \quad \textrm{weak-* in}\quad \big(L^\infty(0,T; L^q(\R^3))\big)^3.$$
Moreover, letting $p\rightarrow\infty$ in \eqref{n}, using the
imbedding \eqref{im} again, we obtain
\begin{equation*}
\begin{split}
\|\H_n\|_{L^\infty(Q_T)}&\le
\textrm{exp}(C\|{\bf\bar{u}}_n\|_\Psi\sqrt{T})|{\H_0}_n|_\infty\\
&\le C\textrm{exp}(C\|{\bf\bar{u}}_n\|_\Psi\sqrt{T})\|{\H_0}_n\|_{1,q}<\infty.
\end{split}
\end{equation*}

Taking the gradient in both sides of \eqref{21}, multiplying by
$|\nabla \H_n|^{q-2}\nabla \H_n$ and integrating over $\R^3$, we
get, with the help of H\"{o}lder's inequality and the imbedding
\eqref{im},
\begin{equation}\label{y3}
\begin{split}
&\f{1}{q}\f{d}{dt}|\nabla
\H_n|^q_q\\&=-\int_{\R^3}|\nabla\H_n|^{q-2}\nabla\H_n:\big(\nabla(\bar{\u}_n\cdot\nabla\H_n)\big)dx\\
&\quad+\int_{\R^3}|\nabla\H_n|^{q-2}\nabla\H_n:\big(\nabla\big((\nabla\bar{\u}_n-(\nabla\cdot\bar{\u}_n){\bf
I})\H_n\big)\big)dx\\
&=-\int_{\R^3}|\nabla\H_n|^{q-2}\sum_{i=1}^3\big(\partial_i\H_n\cdot\partial_i(\bar{\u}_n\cdot\nabla\H_n)
-\partial_i\H_n\cdot\partial_i\big((\nabla\bar{\u}_n-(\nabla\cdot\bar{\u}_n){\bf
I})\H_n\big)\big)dx\\
&=-\int_{\R^3}|\nabla\H_n|^{q-2}\sum_{i,j=1}^3\big(\partial_i\H_n^{(j)}\big(\partial_i\bar{\u}_n\cdot\nabla\H_n^{(j)}
+\bar{\u}_n\cdot\partial_i(\nabla\H_n^{(j)})\big)\big)dx\\
&\quad+\int_{\R^3}|\nabla\H_n|^{q-2}\sum_{i,j,k=1}^3\big(\partial_i\H_n^{(j)}\partial_i\H_n^{(k)}\big(\partial_k\bar{\u}_n^{(j)}
-(\nabla\cdot\bar{\u}_n)\delta_{jk}\big)\\
&\qquad\qquad+\partial_i\H_n^{(j)}\H_n^{(k)}\partial_i\big(\partial_k\bar{\u}_n^{(j)}-(\nabla\cdot\bar{\u}_n)\delta_{jk}\big)\big)dx\\
&\le
C\int_{\R^3}|\nabla\H_n|^q|\nabla\bar{\u}_n|dx-\frac{1}{q}\int_{\R^3}\bar{\u}_n\cdot\nabla(|\nabla\H_n|^q)dx+C\int_{\R^3}|\nabla
\H_n|^q|\nabla{\bf\bar{u}}_n|dx\\
&\quad
+C\int_{\R^3} |\H_n||\nabla \H_n|^{q-1}|\nabla\nabla {\bf\bar{u}}_n|dx\\
&\leq C\int_{\R^3}|\nabla\H_n|^q|\nabla\bar{\u}_n|dx+C\int_{\R^3} |\H_n||\nabla \H_n|^{q-1}|\nabla\nabla {\bf\bar{u}}_n|dx\\
&\le C|\nabla\bar{\u}_n|_\infty|\nabla\H_n|_q^q+C|\H_n|_\infty\|{\bf\bar{u}}_n\|_{2,q}|\nabla \H_n|_q^{q-1}\\
&\le C\|{\bf\bar{u}}_n\|_{2,q}|\nabla
\H_n|^q_q+C\|{\bf\bar{u}}_n\|_{2,q}|\nabla\H_n|_q^{q-1},
\end{split}
\end{equation}
Using Gronwall's inequality, we conclude that
\begin{equation*}
\begin{split}
|\nabla \H_n|_q&\le
\textrm{exp}(C\int_0^t\|{\bf\bar{u}}_n\|_{2,q}d\tau)\left(|\nabla
\H_n(0)|_q+C\int_0^t\|{\bf\bar{u}}_n\|_{2,q}d\tau\right),
\end{split}
\end{equation*}
and hence,
\begin{equation}\label{y31}
\begin{split}
|\nabla \H|_q&\le\liminf_{n\rightarrow\infty}|\nabla \H_n|_q\le
\textrm{exp}(C\int_0^t\|{\bf\bar{u}}\|_{2,q}d\tau)\left(|\nabla
\H_0|_q+C\int_0^t\|{\bf\bar{u}}\|_{2,q}d\tau\right).
\end{split}
\end{equation}
Furthermore,
$$\|\H\|_{L^\infty_T( W^{1,q}(\R^3))}\le \textrm{exp}(C\|{\bf\bar{u}}\|_\Psi\sqrt{T})\left(\|
\H_0\|_{1,q}+C\|{\bf\bar{u}}\|_\Psi\sqrt{T}\right).$$

Passing to the limit as $n\rightarrow\infty$ in \eqref{21}, we show
that \eqref{e33} holds at least in the sense of distributions.
Therefore, $\H_t\in \big(L^2(0,T; L^2(\R^3))\big)^3$, then $\H\in
\big(H^1(0,T; L^q(\R^3)\cap L^2(\R^3))\big)^3$. The proof is
complete.
\end{proof}

\subsection{Local solvability of \eqref{e32}.}
Now we prove the existence of a solution of \eqref{e32}. First we
consider the bilinear form $E(\u,{\bf\phi})$ and the functional
$L(\phi)$ defined by
\begin{equation*}
\begin{split}
E(\u,{\bf\phi})&=\int_0^T\big(\r\u_t-\triangle\u-\nabla(\nabla\cdot\u),\
\phi_t-k\big(\triangle\phi+\nabla(\nabla\cdot\phi)\big)\big)dt\\
&\quad -\big(\u(0),
\triangle\phi(0)+\nabla\big(\nabla\cdot\phi(0)\big)\big),
\end{split}
\end{equation*}
\begin{equation*}
\begin{split}
L(\phi)&=-\int_0^T\big(\r\bar{\u}\cdot\nabla\bar{\u}+\nabla
P-(\nabla\times\H)\times\H,\
\phi_t-k\big(\triangle\phi+\nabla(\nabla\cdot\phi)\big)\big)dt\\&\quad-\big(\u_0,
\triangle\phi(0)+\nabla\big(\nabla\cdot\phi(0)\big)\big)
\end{split}
\end{equation*}
with $k=(\|\r\|_{L^\infty(Q_T)})^{-1}$ for $\phi\in \Phi$. Here, and
in what follows, $(\cdot,\cdot)$ denotes the inner product in
$\big(L^2(\R^3)\big)^3$.
Obviously, $L(\phi)$ is linear continuous on $\Phi$ with respect to
the norm $\|\phi\|_\Phi$. Moreover, by the Cauchy-Schwarz inequality, we get
\begin{equation*}
\begin{split}
E(\phi,\phi)&=\int_0^T\left(|\sqrt{\r}\phi_t|^2_2+k|\triangle\phi+\nabla(\nabla\cdot\phi)|^2_2-k(\r\phi_t,
\triangle\phi+\nabla(\nabla\cdot\phi)\right)dt\\
&\quad+\f{1}{2}\left(|\nabla\phi(T)|^2_2+|\nabla\phi(0)|^2_2+|\nabla\cdot\phi(T)|^2_2+|\nabla\cdot\phi(0)|^2_2\right)\\
&\ge\int_0^T\left(|\sqrt{\r}\phi_t|^2_2+k|\triangle\phi+\nabla(\nabla\cdot\phi)|^2_2-\f{3}{4}|\sqrt{\r}\phi_t|^2_2-\f{k}{3}|\triangle\phi+\nabla(\nabla\cdot\phi)|^2_2\right)dt\\
&\quad+\f{1}{2}\left(|\nabla\phi(T)|^2_2+|\nabla\phi(0)|^2_2+|\nabla\cdot\phi(T)|^2_2+|\nabla\cdot\phi(0)|^2_2\right)\\
&\ge C\|\phi\|_\Phi^2.
\end{split}
\end{equation*}
Hence, by the Lax-Milgram theorem
(see \cite{GT}), there exists a $\u\in \Phi$ such that
\begin{equation}\label{cl6}
E(\u,\phi)=L(\phi)
\end{equation}
for every $\phi\in\Phi$.

If we assume that $\ov{\phi}$ is a solution of the problem
\begin{equation*}
\begin{cases}
&\ov{\phi}_t-k(\triangle\ov{\phi}+\nabla\big(\nabla\cdot\ov{\phi})\big)=0,\\
&\ov{\phi}(0)=h(x)
\end{cases}
\end{equation*}
with $h(x)$ smooth enough,  and
 replace in \eqref{cl6} $\phi$ by
$\ov{\phi}$, then we have
$$\big(\u(0)-\u_0,\  \triangle h+\nabla(\nabla\cdot h)\big)=0,$$ which implies $\u(0)=\u_0$.
Similarly, if $\widetilde{\phi}$ is a solution of the problem
\begin{equation*}
\begin{cases}
& \widetilde{\phi}_t-k\big(\triangle\widetilde{\phi}+\nabla(\nabla\cdot\widetilde{\phi})\big)=g(x,t),\\
&\widetilde{\phi}(0)=0
\end{cases}
\end{equation*}
with $g$ smooth enough, replacing $\phi$ by $\widetilde{\phi}$ in
\eqref{cl6}, then we get
$$\int_0^T\big(\r\u_t-\triangle\u-\nabla(\nabla\cdot\u)+\r\bar{\u}\cdot\nabla
\bar{\u}+\nabla P-(\nabla\times\H)\times\H, \ g\big)\ dt=0,$$ which
implies that $(\u,\r,\H)$ satisfies \eqref{e3} a.e. in $Q_T$.

Next, we prove the higher regularity for $\u$. To avoid tedious
calculations and notation, we work directly with the derivatives
with respect to $t$ of $\u$ instead of its differential quotients.
Multiplying \eqref{e32} by $\u_t$, integrating over $Q_t$ and using
the Cauchy-Schwarz inequality, we obtain
\begin{equation}\label{t}
\begin{split}
&\int_0^t|\sqrt{\r}\u_t|_2^2 \ d\tau+\frac{1}{2}|\nabla\u|_2^2
+\frac{1}{2}|\nabla\cdot\u|_2^2\\&=\frac{1}{2}|\nabla\u(0)|_2^2+\frac{1}{2}|\nabla\cdot\u(0)|_2^2+\int_0^t\big(-\r\bar{\u}\cdot\nabla\bar{\u}-\nabla
P+(\nabla\times\H)\times\H,\ \u_t\big)d\tau\\
&\leq\frac{1}{2}|\nabla\u(0)|_2^2+\frac{1}{2}|\nabla\cdot\u(0)|_2^2+\int_0^t\int_{\R^3}(\frac{1}{3}|\sqrt{\r}\u_t|^2
+\frac{3}{4}|\sqrt{\r}\bar{\u}\cdot\nabla\bar{\u}|^2)\ dxd\tau\\
&\quad +\int_0^t\int_{\R^3}P\nabla\cdot\u_t\
dxd\tau+\int_0^t(\H\cdot\nabla\H,\ \u_t )\
d\tau+\frac{1}{2}\int_0^t\int_{\R^3}|\H|^2\nabla\cdot\u_t\ dxd\tau.
\end{split}
\end{equation}
By H\"{o}lder's inequality and the Gagliardo-Nirenberg-Sobolev
inequalities
\begin{equation}\label{gns}
\begin{split}
|\nabla\bar{\u}|_3\le
C|\nabla\bar{\u}|_2^\frac{1}{2}|\triangle\bar{\u}|_2^\frac{1}{2},
\quad |\bar{\u}|_6\leq C|\nabla\bar{\u}|_2,
\end{split}
\end{equation}
 it is easy to deduce from \eqref{t} that
\begin{equation}\label{cl7}
\begin{split}
&\frac{2}{3}\int_0^t|\sqrt{\r}\u_t|_2^2
d\tau+\frac{1}{2}|\nabla\u|_2^2 +\frac{1}{2}|\nabla\cdot\u|_2^2\\&
\leq\frac{1}{2}|\nabla\u(0)|_2^2
+\frac{1}{2}|\nabla\cdot\u(0)|_2^2+\frac{3}{4}\sup_{0\leq\tau\leq
t}|\r|_\infty\int_0^t|\bar{\u}|_6^2|\nabla\bar{\u}|_3^2 d\tau\\
&\quad+\int_0^t\int_{\R^3}P\nabla\cdot\u_t\ dxd\tau+\int_0^t(\H\cdot\nabla\H,\ \u_t
)\ d\tau+\frac{1}{2}\int_0^t\int_{\R^3}|\H|^2\nabla\cdot\u_t\  dxd\tau\\
&\leq\frac{1}{2}|\nabla\u(0)|_2^2
+\frac{1}{2}|\nabla\cdot\u(0)|_2^2+C\sup_{0\leq\tau\leq
t}|\r|_\infty\int_0^t|\nabla\bar{\u}|_2^3|\triangle\bar{\u}|_2\
d\tau\\
&\quad +C_\delta\left(\int_0^t|P|_2^2\ d\tau+\int_0^t|\frac{1}{\sqrt{\r}}\H|_\infty^2|\nabla\H|_2^2\
d\tau+\int_0^t|\H|_\infty^2|\H|_2^2 d\tau\right)\\
&\quad+\delta\int_0^t|\sqrt{\r}\u_t|_2^2\ d\tau
+2\delta\int_0^t|\nabla\cdot\u_t|_2^2\ d\tau.
\end{split}
\end{equation}

Now we differentiate \eqref{e32} with respect to $t$ and get
\begin{equation}\label{cl8}
\begin{split}
&\r_t\u_t+\r\u_{tt}-\triangle\u_t-\nabla(\nabla\cdot\u_t)\\
&=-\r_t \bar{\u}\cdot\nabla\bar{\u}
-\r\bar{\u}_t\cdot\nabla\bar{\u}-\r
\bar{\u}\cdot\nabla\bar{\u}_t-\nabla P_t+\H_t\cdot\nabla\H
+\H\cdot\nabla\H_t-\nabla(\H\cdot\H_t).
\end{split}
\end{equation}
Multiplying \eqref{cl8} by $\u_t$, integrating over $\R^3$, bearing
in mind the continuity equation \eqref{e31} and the induction
equation \eqref{e33}, we obtain

\begin{equation}\label{cl9}
\begin{split}
&\f{1}{2}\f{d}{dt}|\sqrt{\r}\u_t|^2_2+\f{1}{2}\int_{\R^3}\r_t|\u_t|^2dx+|\nabla\u_t|^2_2+
|\nabla\cdot\u_t|^2_2\\&\le|\r|_\infty|\bar{\u}|_\infty|\nabla
\bar{\u}|_2|\nabla
\bar{\u}|_3|\u_t|_6+|\r|_\infty|\bar{\u}|_6^2|\nabla(\nabla\bar{\u})|_2|\u_t|_6+|\r|_\infty|\bar{\u}|_\infty|\bar{\u}|_6|\nabla
\bar{\u}|_3|\nabla\u_t|_2\\
&\quad+|\sqrt{\r}|_\infty|\sqrt{\r}\bar{\u}_t |_2|\nabla
\bar{\u}|_3|\u_t|_6+|\sqrt{\r}|_\infty|\nabla\bar{\u}_t
|_2|\bar{\u}|_\infty|\sqrt{\r}\u_t|_2+\|P'\|_{C^0_{loc}}|\r_t
|_2|\nabla\u_t|_2\\
&\quad+\big(\nabla\times (\bar{\u}\times \H)\cdot\nabla\H, \u_t\big)+\big(\H\cdot\nabla\H_t ,\ \u_t\big)+|\H|_\infty|\H_t|_2|\nabla\cdot\u_t|_2\\
&=|\r|_\infty|\bar{\u}|_\infty|\nabla \bar{\u}|_2|\nabla
\bar{\u}|_3|\u_t|_6+|\r|_\infty|\bar{\u}|_6^2|\nabla(\nabla\bar{\u})|_2|\u_t|_6+|\r|_\infty|\bar{\u}|_\infty|\bar{\u}|_6|\nabla
\bar{\u}|_3|\nabla\u_t|_2\\
&\quad+|\sqrt{\r}|_\infty|\sqrt{\r}\bar{\u}_t |_2|\nabla
\bar{\u}|_3|\u_t|_6+|\sqrt{\r}|_\infty|\nabla\bar{\u}_t
|_2|\bar{\u}|_\infty|\sqrt{\r}\u_t|_2+\|P'\|_{C^0_{loc}}|\r_t
|_2|\nabla\u_t|_2\\
&\quad-\int_{\R^3}\H\cdot(\nabla\u_t)\nabla\times (\bar{\u}\times
\H)\ dx-\int_{\R^3}\H_t\cdot(\nabla\u_t)\ \H\
dx+|\H|_\infty|\H_t|_2|\nabla\cdot\u_t|_2
\\
&\leq |\r|_\infty|\bar{\u}|_\infty|\nabla \bar{\u}|_2|\nabla
\bar{\u}|_3|\u_t|_6+|\r|_\infty|\bar{\u}|_6^2|\nabla(\nabla\bar{\u})|_2|\u_t|_6+|\r|_\infty|\bar{\u}|_\infty|\bar{\u}|_6|\nabla
\bar{\u}|_3|\nabla\u_t|_2\\
&\quad+|\sqrt{\r}|_\infty|\sqrt{\r}\bar{\u}_t |_2|\nabla
\bar{\u}|_3|\u_t|_6+|\sqrt{\r}|_\infty|\nabla\bar{\u}_t
|_2|\bar{\u}|_\infty|\sqrt{\r}\u_t|_2+\|P'\|_{C^0_{loc}}|\r_t
|_2|\nabla\u_t|_2\\
&\quad+|\H|_\infty|\bar{\u}|_\infty|\nabla\H|_2|\nabla\u_t|_2+|\H|_\infty^2|\nabla\bar{\u}|_2|\nabla\u_t|_2
+|\H|_\infty^2|\nabla\cdot\bar{\u}|_2|\nabla\u_t|_2\\
&\quad+|\H|_\infty|\H_t|_2|\nabla\u_t|_2+|\H|_\infty|\H_t|_2|\nabla\cdot\u_t|_2.
\end{split}
\end{equation}
Integrating \eqref{cl9} with respect to $t$, taking advantage of the
continuity equation and the Gagliardo-Nirenberg-Sobolev inequalities
as \eqref{gns}, we find, since $\r\in L^\infty(Q_T),$
\begin{equation}\label{c20}
\begin{split}
&\f{1}{2}|\sqrt{\r}\u_t|^2_2+\int_0^t(|\nabla\u_t|_2^2+|\nabla\cdot\u_t|^2_2)d\tau\\&\le
\frac{1}{2}|\sqrt{\r(0)}\u_t(0)|^2_2+C\int_0^t|\r|_\infty(|\bar{\u}|_\infty|\nabla\bar{\u}|_2^{\frac{3}{2}}
|\triangle\bar{\u}|_2^\frac{1}{2}|\nabla\u_t|_2+|\nabla\bar{\u}|_2^2|\triangle\bar{\u}|_2|\nabla\u_t|_2)d\tau\\
&\quad+C\int_0^t|\sqrt{\r}|_\infty(|\sqrt{\r}\bar{\u}_t|_2|\nabla\bar{\u}|_2^{\frac{1}{2}}|\triangle\bar{\u}|_2^{\frac{1}{2}}|\nabla\u_t|_2
+|\bar{\u}|_\infty|\nabla\bar{\u}_t|_2|\sqrt{\r}{\u}_t|_2)d\tau\\
&\quad+\int_0^t\|P'\|_{C^0_{loc}}|\r_t
|_2|\nabla\u_t|_2d\tau+\int_0^t\big(|\H|_\infty|\bar{\u}|_\infty|\nabla\H|_2|\nabla\u_t|_2+|\H|_\infty^2|\nabla\bar{\u}|_2|\nabla\u_t|_2
\\
&\quad\quad+|\H|_\infty^2|\nabla\cdot\bar{\u}|_2|\nabla\u_t|_2+|\H|_\infty|\H_t|_2|\nabla\u_t|_2+|\H|_\infty|\H_t|_2|\nabla\cdot\u_t|_2\big)d\tau
\\&\le\frac{1}{2}|\sqrt{\r(0)}\u_t(0)|^2_2+C_\delta\big\{\sup_{0\leq\tau\leq t}|\r|_\infty^2\int_0^t(|\bar{\u}|_\infty^2|\nabla\bar{\u}|_2^3|\triangle\bar{\u}|_2
+|\nabla\bar{\u}|_2^4|\triangle\bar{\u}|_2^2)d\tau\\
&\quad+\sup_{0\leq\tau\leq
t}|\sqrt{\r}|_\infty^2\int_0^t|\sqrt{\r}\bar{\u}_t|^2_2|\nabla\bar{\u}|_2|\triangle\bar{\u}|_2\
d\tau +\sup_{0\leq\tau\leq
t}|\sqrt{\r}|_\infty^2\int_0^t|\bar{\u}|_\infty^2|\sqrt{\r}\u_t|^2_2\ d\tau\\
&\quad+\int_0^t\|P'\|^2_{C^0_{loc}}|\r_t
|_2^2\ d\tau+\int_0^t(|\H|_\infty^2|\bar{\u}|_\infty^2|\nabla\H|^2_2+|\H|_\infty^4|\nabla\bar{\u}|_2^2+|\H|_\infty^4|\nabla\cdot\bar{\u}|_2^2\\
&\quad\quad+2|\H|_\infty^2|\H_t|_2^2)\
d\tau\big\}+8\delta\int_0^t|\nabla\u_t|_2^2\
d\tau+\delta\int_0^t|\nabla \cdot\u_t|_2^2\
d\tau+\delta\int_0^t|\nabla\bar{\u}_t|_2^2\ d\tau,
\end{split}
\end{equation}
where $\delta>0$ is small enough.

 Summing \eqref{cl7} and
\eqref{c20}, and for suitable $\delta\ (\delta\leq \frac{1}{16})$,
we can first obtain,  by Gronwall's
inequality,$$\|\sqrt{\r}\u_t\|_{L^\infty_T(L^2(\R^3))}\leq C,$$ and
secondly,
\begin{equation*}
\begin{split}
&\f{1}{2}|\sqrt{\r}\u_t|^2_2+\f{1}{2}|\nabla\u|^2_2+\f{1}{2}|\nabla\cdot\u|_2^2+\f{1}{3}\int_0^t|\sqrt{\r}\u_t|^2_2d\tau+\f{1}{4}\int_0^t(|\nabla\u_t|^2_2
+|\nabla\cdot\u_t|^2_2)d\tau\\
&\le
\frac{1}{2}|\sqrt{\r(0)}\u_t(0)|^2_2+\frac{1}{2}|\nabla\u(0)|_2^2
+\frac{1}{2}|\nabla\cdot\u(0)|_2^2+C_\delta\int_0^t|P|^2_2d\tau
+C_\delta\int_0^t|\frac{1}{\sqrt{\r}}\H|_\infty^2|\nabla\H|_2^2d\tau\\
&\quad+C\sup_{0\leq\tau\leq
t}|\r|_\infty\int_0^t|\nabla\bar{\u}|_2^3|\triangle\bar{\u}|_2 d\tau
+C_\delta\big\{\sup_{0\leq\tau\leq
t}|\r|_\infty^2\int_0^t(|\bar{\u}|_\infty^2|\nabla\bar{\u}|_2^3|\triangle\bar{\u}|_2
+|\nabla\bar{\u}|_2^4|\triangle\bar{\u}|_2^2)d\tau\\
&\quad+\sup_{0\leq\tau\leq
t}|\sqrt{\r}|_\infty^2(\int_0^t|\sqrt{\r}\bar{\u}_t|^2_2|\nabla\bar{\u}|_2|\triangle\bar{\u}|_2d\tau
+\int_0^t|\bar{\u}|_\infty^2|\sqrt{\r}\u_t|^2_2d\tau)+\int_0^t\|P'\|^2_{C^0_{loc}}|\r_t
|_2^2d\tau\\
&\quad+\int_0^t(|\H|_\infty^2|\bar{\u}|_\infty^2|\nabla\H|^2_2+|\H|_\infty^4|\nabla\bar{\u}|_2^2+|\H|_\infty^4|\nabla\cdot\bar{\u}|_2^2+2|\H|_\infty^2|\H_t|_2^2+|\H|_\infty^2|\H|_2^2)d\tau\big\}\\
&\quad+\delta\int_0^t|\nabla\bar{\u}_t|_2^2d\tau,
\end{split}
\end{equation*}
which implies
\begin{equation}\label{cl11}
\begin{cases}
&\sqrt{\r}\u_t\in \big(L^\infty(0,T; L^2(\R^3))\big)^3;\quad
\u_t\in \big(L^2(0,T; H^1(\R^3))\big)^3;\\
&\nabla\u\in \big(L^\infty(0,T; L^2(\R^3))\big)^9.
\end{cases}
\end{equation}

On the other hand, rewrite \eqref{e32} as
\begin{equation}\label{ee}
-\triangle\u-\nabla(\nabla\cdot\u)=-\r\u_t-\r\bar{\u}\cdot\nabla\bar{\u}-\nabla
P+\H\cdot\nabla\H-\frac{1}{2}\nabla(|\H|^2)\equiv {\bf v},
\end{equation}
which is a system with a strongly elliptic left-hand side.

First, by a classical result on elliptic systems, there exists a
positive constant $C$ such that$$\|\u\|_{2,q}\leq
C|\triangle\u+\nabla(\nabla\cdot\u)|_q.$$
Then  ${\bf v}\in \big(L^\infty(0,T;L^2(\R^3))\big)^3$ by \eqref{t} and \eqref{cl11},
thus $\u\in \big(L^\infty(0,T;  H^2(\R^3))\big)^3$.
Second, since $\r$ is bounded from below and $\sqrt{\r}\u_t\in
\big(L^2(Q_T)\big)^3$, we know that $\u_t\in \big(L^2(Q_T)\big)^3$.
Hence, by the Gagliardo-Nirenberg-Sobolev inequality, we get
$$\|\u_t\|_{L^2_T( L^q(\R^3))}\le
C\|\u_t\|_{L^2(Q_T)}^\theta\|\nabla\u_t\|_{L^2(Q_T)}^{1-\theta}\ \
\textrm{as}\  q\in (3,6],$$ for some $\theta\in [0,1)$. This implies
that, by \eqref{cl11}, $\u_t\in \big(L^2(0,T; L^q(\R^3))\big)^3$,
then ${\bf v}\in \big(L^2(0,T;L^q(\R^3))\big)^3.$ A similar
consideration  leads to $\u\in
\big(L^2(0,T;W^{2,q}(\R^3))\big)^3$. Hence, we can conclude that
$\u\in\Psi$.

\subsection{Existence for \eqref{e1}.}

The above argument guarantees the existence and uniqueness of the
solution to the system \eqref{e3} which enable us to define the map
$\u=G(\bar{\u})$ given by the composition of $$f:\ \bar{\u}\rightarrow
\r(\bar{\u}),\quad g:\ \bar{\u}\rightarrow \H(\bar{\u}),\quad d:\
\big(\r(\bar{\u}), \bar{\u}, \H(\bar{\u})\big)\rightarrow \u.$$
Obviously,  the fixed point of $G: \Psi\rightarrow \Psi$ is
the solution of the system \eqref{e1}.
To find a fixed point of  $G$,  we will use the Schauder-Tychonoff fixed point theorem
(Theorem 5.28, \cite{RW}).

Consider the set
\begin{equation*}
\begin{split}
M=\big\{\phi(x,t)\in\R^3\mid & \; \max\big(\|\phi\|_{L^2_T(
W^{2,q}(\R^3)\cap H^2(\R^3))}, \,
 \|\sqrt{\r}\phi_t\|_{L^\infty_T(L^2(\R^3))}, \\&\qquad\quad \|\phi\|_{L^\infty_T( H^2(\R^3))}, \,
\|\phi_t\|_{L^2_T(H^1(\R^3))}\big)\le r\big\}
\end{split}
\end{equation*}
with
$$r^2=\sigma(|\triangle\u_0|_2^2+|\u_0|_\infty^2 |\nabla \u_0|_2^2
+\|\r_0\|_{W^{1,q}\cap H^1}^2 +\|\H_0\|_{W^{1,q}\cap
H^1}^2)/\alpha$$ for some suitable sufficiently large constant
$\sigma\geq 1$, where\ $q\in (3,6]$ .
Clearly, $M$ is a compact set in $\big(L^2(Q_T)\big)^3$. As we are
going to use a fixed point theorem,  we need to show that
$G(M)\subset M$ and $G$ is continuous in $M$ with respect to the
norm in $\big(L^2(Q_T)\big)^3$.

We first prove that $G(M)\subset M$ for some $T=\ov{T}$. Indeed,
assuming $\bar{\u}\in M$, from \eqref{2}, \eqref{20},  and \eqref{y31},
by H\"{o}der's inequality, we know that for $3<q\leq 6$,
\begin{equation}\label{cl12}
\begin{cases}
\alpha\,\textrm{exp}(-Cr\sqrt{t})\le\r(x,t)\le
\textrm{exp}(Cr\sqrt{t}),\\
|\nabla\r|_q\le \left(|\nabla\r_0|_q
+\textrm{exp}\left(Cr\sqrt{t}\right)r\sqrt{t}\right)\textrm{exp}\left(Cr\sqrt{t}\right),\\
|\nabla\H|_q\le \left(|\nabla\H_0|_q
+Cr\sqrt{t}\right)\textrm{exp}\left(Cr\sqrt{t}\right).
\end{cases}
\end{equation}
Hence, from \eqref{c20} and \eqref{cl12}, it follows that
\begin{equation}\label{cl13}
\begin{split}
&\f{1}{2}|\sqrt{\r}\u_t|_2^2+\f{1}{2}\int_0^t|\nabla\u_t|_2^2d\tau \\
&\le \f{1}{2}|\sqrt{\r(0)}\u_t(0)|_2^2
+C_{\dl}t(r^2+\dl)\|\sqrt{\r}\u_t\|_{L^\infty_t(
L^2(\R^3))}^2+C_{\dl}t(r^4+r^6)+Ctr^2.
\end{split}
\end{equation}
Since if we multiply \eqref{e32} by $\u_t$, integrate over $\R^3$
and let $t=0$, then
\begin{equation*}
\begin{split}
&|\sqrt{\r(0)}\u_t(0)|_2^2\\&=\int_{\R^3}\big(\triangle\u_0+\nabla(\nabla\cdot\u_0)-\r_0\u_0\cdot\nabla\u_0-\nabla
P(\r_0)+\H_0\cdot\nabla\H_0-\frac{1}{2}\nabla(|\H_0|^2)\big)\cdot\u_t(0)dx\\
&\le\frac{1}{\sqrt{\alpha}}\big(|\triangle\u_0|_2+|\nabla(\nabla\cdot\u_0)|_2+|\r_0|_\infty|\u_0|_\infty|\nabla\u_0|_2\\
&\quad\quad\quad\quad\quad\quad\quad+\|P'\|_{C^0_{loc}}|\nabla\r_0|_2+2|\H_0|_\infty|\nabla\H_0|_2\big)|\sqrt{\r(0)}\u_t(0)|_2,
\end{split}
\end{equation*}
i.e.,
\begin{equation}\label{r0}
\begin{split}
|\sqrt{\r(0)}\u_t(0)|_2\leq &
\frac{1}{\sqrt{\alpha}}\big(|\triangle\u_0|_2+|\nabla(\nabla\cdot\u_0)|_2+|\r_0|_\infty|\u_0|_\infty|\nabla\u_0|_2\\
&\quad\quad\qquad\qquad+\|P'\|_{C^0_{loc}}|\nabla\r_0|_2+2|\H_0|_\infty|\nabla\H_0|_2\big).
\end{split}
\end{equation}
Combining \eqref{cl13} and \eqref{r0}, taking $\delta$ and $\ov{T}$
suitably small, we derive that
\begin{equation*}
\|\sqrt{\r}\u_t\|_{L^\infty_{\ov{T}}\big(
L^2(\R^3)\big)}^2+\|\nabla\u_t\|_{L^2(Q_{\ov{T}})}^2\le \f{1}{3}r^2,
\end{equation*}
\begin{equation}\label{cl14}
\|\sqrt{\r}\u_t\|_{L^\infty_{\ov{T}}(
L^2(\R^3))}^2+\|\u_t\|_{L^2_{\ov{T}}(H^1(\R^3))}^2\le \f{2}{3}r^2.
\end{equation}
Then, we estimate the norm $\|\u\|_{L_{\ov{T}}^\infty(
H^2(\R^3))}$ and the norm $\|\u\|_{L^2_{\ov{T}}( W^{2,q}(\R^3))}$.
Indeed, taking advantage of \eqref{ee}, on the one hand, we get
\begin{equation*}
\begin{split}
&\|\triangle\u+\nabla(\nabla\cdot\u)\|_{L^\infty_{\ov{T}}(L^2(\R^3))}\\&\le
\|\sqrt{\r}\|_{L^\infty(Q_{\ov{T}})}\|\sqrt{\r}\u_t\|_{L^\infty_{\ov{T}}(L^2(\R^3))}
+\|\r\|_{L^\infty(Q_{\ov{T}})}\|\bar{\u}\|_{L^\infty(Q_{\ov{T}})}\|\nabla
\bar{\u}\|_{L^\infty_{\ov{T}}(L^2(\R^3))}\\&\quad+C\|\nabla\r\|_{L^\infty_{\ov{T}}(
L^2(\R^3))}+C\|\nabla\H\|_{L^\infty_{\ov{T}}(L^2(\R^3))},
\end{split}
\end{equation*}
which leads to
$$\|\u\|^2_{L^\infty_{\ov{T}}(H^2(\R^3))}\le r^2$$
from a classical result for elliptic systems.
On the other hand, we have, also by the classical result on elliptic
systems,
\begin{equation*}
\begin{split}
\int_0^{\ov{T}}\|\u\|_{2,q}^2\ dt&\le
C\int_0^{\ov{T}}(|\bar{\u}|^2_{L^\infty}|\nabla
\bar{\u}|_q^2+|\nabla\r|_q^2+|\u_t|_q^2+|\H|_\infty^2|\nabla\H|_q^2)dt\\
&\le Cr^4\ov{T}+r^2\ov{T}\\&\le r^2
\end{split}
\end{equation*}
for some suitable small $\ov{T}$. Hence, we have shown that $G(M)\subset
M$.

Next, we prove the continuity of $G$ in $M$. We observe that if
$\{\bar{\u}_n\}_{n=1}^\infty\subset M$, then there exists a
subsequence (still denoted by $\{\bar{\u}_n\}_{n=1}^\infty$) such
that $$\bar{\u}_n\rightarrow \bar{\u} \ \  \textrm{strongly in}\ M\
\textrm{as}\ \ n\rightarrow \infty.$$
 Let $\r_n$ and $\r$ be the
solutions of
$${\r_n}_t+\nabla\cdot(\r_n\bar{\u}_n)=0,\quad \r_n(0)=\r_0,$$ and $$\quad \r_t+\nabla\cdot(\r\bar{\u})=0,\quad \r(0)=\r_0,$$
respectively.
 Denote $\ov{\r}_n=\r_n-\r$, then $\ov{\r}_n$ satisfies
\begin{equation}\label{c}
{\ov{\r}_n}_t+\bar{\u}_n\cdot\nabla\ov{\r}_n+(\bar{\u}_n-\bar{\u})\cdot\nabla\r+\ov{\r}_n\nabla\cdot\bar{\u}_n
+\r\nabla\cdot(\bar{\u}_n-\bar{\u})=0, \quad \ov{\r}_n(0)=0.
\end{equation}
 Multiplying \eqref{c} by ${\ov{\r}_n}$,
integrating over $Q_T$, and applying Gronwall's inequality, it is
easy to derive that
$$|\ov{\r}_n|_2^2\le
\textrm{exp}(Cr\ov{T})\int_0^{\ov{T}}\left(|(\bar{\u}-\bar{\u}_n)\cdot\nabla\r|_2^2+|\r\nabla\cdot(\bar{\u}_n-\bar{\u})|_2^2\right)dt,$$
which implies that $\r_n\rightarrow\r$ strongly in
$L^\infty\big(0,\ov{T}; L^2(\R^3)\big)$.

Similarly, we can show that $\H_n\rightarrow\H$ strongly in
$\big(L^\infty(0,\ov{T}; L^2(\R^3))\big)^3$.

 Now let $\u_n$ and $\u$ be
the solutions of \eqref{e32} corresponding to $\bar{\u}_n$ and
$\bar{\u}$ with $$\u_n(0)=\u(0)=\u_0.$$ Then we have, denoting ${\bf
U}_n=\u_n-\u$ and ${\bf\bar{U}}_n=\bar{\u}_n-\bar{\u}$,

\begin{equation}\label{cl31}
\begin{split}
&\r_n {{\bf U}_n}_t-\triangle {\bf U}_n-\nabla\nabla\cdot {\bf U}_n \\
&=-\ov{\r}_n\u_t-\r_n {\bf\bar{U}}_n\cdot\nabla \bar{\u}_n-\ov{\r}_n\bar{\u}\cdot\nabla \bar{\u}_n-\r \bar{\u}\cdot\nabla {\bf\bar{U}}_n\\
&\quad+\H_n\cdot\nabla\H_n-\H\cdot\nabla\H-\nabla P(\r_n)+\nabla
P(\r).
\end{split}
\end{equation}
Multiplying \eqref{cl31} by ${{\bf U}_n}_t$, integrating over
$Q_{\ov{T}}$, and take advantage of the convergence of $\r_n$ and
$\H_n$, we can prove as a routine matter that $$\nabla
{\bf\bar{U}}_n\rightarrow 0 \ \textrm{strongly in}\
\big(L^2(Q_{\ov{T}})\big)^9$$ and $$\quad \sqrt{\r_n}{{\bf U}_n}_t
\rightarrow 0 \ \textrm{strongly in}\ \big(L^2(Q_{\ov{T}})\big)^3.$$
Due to the convergence of $\r_n$, we deduce that $${{\bf
U}_n}_t\rightarrow 0 \ \textrm{strongly in}\
\big(L^2(Q_{\ov{T}})\big)^3.$$ Hence, by using the identity ${{\bf
U}_n}(t)=\int_0^t{{\bf U}_n}_td\tau \ \big({\bf U}_n(0)=0\big)$, we
get $${\bf U}_n\rightarrow 0  \ \textrm{ strongly in}\
\big(L^2(Q_{\ov{T}})\big)^3.$$ Thus, the map $G$ is continuous in
$M$. The existence of a local solution is completely proved.

\bigskip\bigskip
\section{Uniqueness}
We proceed to prove the uniqueness of the solution by the same
procedure as that used for the continuity of $G$.
We have already proved that $\textrm{for} \ 3<q\leq 6$,
\begin{equation*}
\begin{split}
&\u_t \in \big(L^2(0,\ov{T}; L^2(\R^3)\cap L^q(\R^3))\big)^3, \\
&\nabla \r\in
\big(L^2(0,\ov{T}; L^2(\R^3)\cap L^q(\R^3))\big)^3, \; \\
&\nabla \H\in \big(L^2(0,\ov{T};
L^2(\R^3)\cap L^q(\R^3))\big)^9.
\end{split}
\end{equation*}
Using the standard interpolation, we get
\begin{equation*}
\begin{split}
&\u_t\in \big(L^{2}(0,\ov{T}; L^3(\R^3))\big)^3,\\
& \nabla \r\in \big(L^2(0,\ov{T}; L^3(\R^3))\big)^3,\\
&\nabla \H\in \big(L^2(0,\ov{T}; L^3(\R^3))\big)^9,
 \end{split}
\end{equation*}
 where $$\f{1}{3}=\f{\theta}{2}+\f{1-\theta}{q}.$$

 Now, assume that $\u_1$, $\u_2$ satisfy \eqref{e1}
for some $T>0$ and denote $$\r:=\r(\u_1)-\r(\u_2)=\r_1-\r_2,\
\u:=\u_1-\u_2, \  \H:=\H(\u_1)-\H(\u_2)=\H_1-\H_2.$$ Then, we have
\begin{equation}\label{12}
\r_t +\nabla\r\cdot\u_1+\nabla
\r_2\cdot\u+\r\nabla\cdot\u_1+\r_2\nabla\cdot\u=0,
\end{equation}
with $\r(0)=0.$
 Multiplying \eqref{12} by $\r$ and integrating over
$\R^3$, we have
\begin{equation*}
\begin{split}
&\f{1}{2}\f{d}{dt}|\r|_2^2-\f{1}{2}\int_{\R^3}|\r|^2\nabla\cdot
\u_1dx+\int_{\R^3}\r\nabla \r_2\cdot\u dx
+\int_{\R^3}|\r|^2\nabla\cdot \u_1dx+\int_{\R^3}\r\r_2\nabla\cdot\u
dx=0.
\end{split}
\end{equation*}
Combining  the Cauchy-Schwarz inequality,  the H\"{o}lder inequality and $|\u|_6\leq
C|\nabla\u|_2,$
 we get,
\begin{equation}\label{1213}
\begin{split}
\f{d}{dt}|\r|^2_2&\le |\nabla\cdot\u_1|_\infty|\r|^2_2+2|\u|_6
|\r\nabla \rho_2|_{\f{6}{5}}+\varepsilon|\nabla\u|^2_2
+C_\varepsilon|\rho_2|_\infty^2|\r|^2_2\\&\le
|\nabla\cdot\u_1|_\infty|\r|^2_2+\varepsilon|\nabla\u|_2^2
+C_\varepsilon|\r\nabla
\rho_2|^2_{\f{6}{5}}+\varepsilon|\nabla\u|^2_2
+C_\varepsilon|\rho_2|_\infty^2|\r|^2_2\\
&\le
|\nabla\cdot\u_1|_\infty|\r|^2_2+\varepsilon|\nabla\u|_2^2+C_\varepsilon|\nabla
\rho_2|^2_3|\r|_2^2+\varepsilon|\nabla\u|^2_2
+C_\varepsilon|\rho_2|_\infty^2|\r|^2_2\\
&\le\eta_1(\varepsilon)|\r|_2^2+2\varepsilon|\nabla\u|^2_2,
\end{split}
\end{equation}
where $\eta_1(\varepsilon)=|\nabla\cdot
\u_1|_\infty+C_\varepsilon(|\nabla \rho_2|^2_3+|\rho_2|^2_\infty),\
\varepsilon>0.$

 Similarly, we have
\begin{equation}\label{121}
\H_t
+\u_1\cdot\nabla\H+\u\cdot\nabla\H_2=\big(\nabla\u_1-(\nabla\cdot\u_1){\bf
I}\big)\ \H+\big(\nabla\u-(\nabla\cdot\u){\bf I}\big)\ \H_2,
\end{equation}
with $\H(0)=0$.
Using the same technique as for $\rho$, we get
\begin{equation*}
\begin{split}
&\f{1}{2}\f{d}{dt}|\H|_2^2-\f{1}{2}\int_{\R^3}|\H|^2\nabla\cdot
\u_1\ dx+\int_{\R^3}\H\cdot(\u\cdot\nabla\H_2)\
dx\\&=\int_{\R^3}\H\cdot\big(\nabla\u_1-(\nabla\cdot\u_1){\bf
I}\big)\ \H \ dx+\int_{\R^3}\H\cdot\big(\nabla\u-(\nabla\cdot\u){\bf
I}\big)\ \H_2\ dx,
\end{split}
\end{equation*}
and
\begin{equation}\label{1212}
\begin{split}
\f{d}{dt}|\H|^2_2&\le C|\nabla\cdot
\u_1|_\infty|\H|^2_2+\varepsilon|\nabla \u|_2^2
+C_\varepsilon|\H_2|_\infty^2|\H|_2^2+\varepsilon|\nabla
\u|_2^2+C_\varepsilon|\H\cdot\nabla\H_2|_{\frac{6}{5}}^2\\
&\le C|\nabla\cdot\u_1|_\infty|\H|^2_2+2\varepsilon|\nabla
\u|_2^2+C_\varepsilon|\H_2|^2_\infty|\H|^2_2
+C_\varepsilon|\nabla\H_2|_3^2|\H|^2_2\\
&\le\eta_2(\varepsilon)|\H|_2^2+2\varepsilon|\nabla \u|^2_2,
\end{split}
\end{equation}
where $\eta_2(\varepsilon)=C|\nabla\cdot
\u_1|_\infty+C_\varepsilon(|\H_2|^2_\infty+|\nabla \H_2|^2_3),\
\varepsilon>0.$

 For $\u_i$,\ $i=1, 2$,
\begin{equation*}
\begin{cases}
&\r_i{\u_i}_t-\triangle\u_i-\nabla\cdot\u_i=-\r_i
\u_i\cdot\nabla\u_i-\nabla P(\r_i)+\H_i\cdot\nabla\H_i-\frac{1}{2}\nabla(|\H_i|^2),\\
&\u_i(0)=\u_0.
\end{cases}
\end{equation*}
It is easy to derive
\begin{equation}\label{13}
\begin{split}
&\r_1\u_t-\triangle\u-\nabla(\nabla\cdot\u)\\
&=-\r{\u_2}_t
-\r_1\u\cdot\nabla\u_1-\r\u_2\cdot\nabla\u_1-\r_2\u_2\cdot\nabla\u-\nabla
P(\r_1)+\nabla P(\r_2)\\
&\quad+\H_1\cdot\nabla\H_1-\H_2\cdot\nabla\H_2-\frac{1}{2}\nabla(|\H_1|^2)+\frac{1}{2}\nabla(|\H_2|^2),
\end{split}
\end{equation}
with $\u(0)=0.$

Using once more the same technique as for $\rho, \H$, and bearing in
mind the continuity equation, we deduce that
\begin{equation}\label{14}
\begin{split}
&\frac{1}{2}\frac{d}{dt}|\sqrt{\r}_1\u|_2^2+|\nabla\u|^2_2+|\nabla\cdot\u|_2^2\\
&=-\frac{1}{2}\int_{\R^3}|\u|^2\nabla\cdot(\r_1\u_1)dx-\int_{\R^3}
\Big(\r{\u_2}_t\cdot\u+\r_1\u\cdot\nabla\u_1\cdot\u+\r\u_2\cdot\nabla\u_1\cdot\u\\&\quad+\r_2\u_2\cdot\nabla\u\cdot\u+\nabla
P(\r_1)\cdot\u-\nabla
P(\r_2)\cdot\u-\H\cdot\nabla\H_1\cdot\u-\H_2\cdot\nabla\H\cdot\u\\&\quad+\frac{1}{2}\big(\nabla(|\H_1|^2)-\nabla(|\H_2|^2)\big)\cdot\u\Big)dx\\
&=\int_{\R^3}\nabla\u\cdot\u\cdot(\r_1\u_1)dx+\int_{\R^3}
\Big(\r{\u_2}_t\cdot\u-\r_1\u\cdot\nabla\u_1\cdot\u-\r\u_2\cdot\nabla\u_1\cdot\u-\r_2\u_2\cdot\nabla\u\cdot\u\\
&\qquad\qquad-\nabla
P(\r_1)\cdot\u+\nabla
P(\r_2)\cdot\u+\H\cdot\nabla\H_1\cdot\u+\H_2\cdot\nabla\H\cdot\u\\
&\qquad\qquad -\nabla\H_1\cdot\H\cdot\u-\nabla\H\cdot\H_2\cdot\u\Big)dx\\
&\le
C_\varepsilon|\r_1|_\infty^2|\u_1|_\infty^2|\u|_2^2+\varepsilon|\nabla\u|_2^2+\varepsilon|\nabla\u|^2_2
+C_\varepsilon|{\u_2}_t|_3^2|\r|_2^2+|\sqrt{\r}_1|_\infty|\nabla\u_1|_\infty|\u|_2^2\\
&\quad+|\u_2|_\infty|\nabla\u_1|_\infty(|\r|_2^2+|\u|^2_2)
+C_\varepsilon|\r_2|_\infty^2|\u_2|_\infty^2|\u|_2^2+\varepsilon|\nabla\u|_2^2+C_\varepsilon\|P'\|^2_{C^0_{loc}}|\rho|_2^2
+\varepsilon|\nabla\u|_2^2\\
&\quad+\varepsilon|\nabla\u|_2^2+C_\varepsilon|\nabla\H_1|_2^2|\H|^2_2+C_\varepsilon(|\H_1|^2_\infty+|\H_2|_\infty^2)|\H|_2^2+\varepsilon|\nabla\cdot\u|^2_2\\
&\leq
5\varepsilon|\nabla\u|_2^2+\varepsilon|\nabla\cdot\u|^2_2+\eta_3(\varepsilon)(|\r|_2^2+|\u|_2^2+|\H|_2^2),
\end{split}
\end{equation}
where
\begin{equation*}
\begin{split}
&\eta_3(\varepsilon)=C_\varepsilon\Big(|\r_1|_\infty^2|\u_1|_\infty^2+|{\u_2}_t|_3^2+|\sqrt{\r}_1|_\infty|\nabla\u_1|_\infty+|\u_2|_\infty|\nabla\u_1|_\infty
+|\r_2|_\infty^2|\u_2|_\infty^2\\
&\quad\quad\quad\quad\quad\quad\quad\quad\quad\quad+\|P'\|^2_{C^0_{loc}}+|\nabla\H_1|_2^2+|\H_1|^2_\infty+|\H_2|_\infty^2\Big).
\end{split}
\end{equation*}

Summing up \eqref{1213},\eqref{1212} and \eqref{14}, by taking
$\varepsilon\leq\frac{1}{14}$, we obtain
\begin{equation}\label{15}
\begin{split}
&\frac{d}{dt}(|\sqrt{\r}_1\u|_2^2+|\r|_2^2+|\H|_2^2)+|\nabla\u|^2_2\leq
2\big(\eta_1(\varepsilon)+\eta_2(\varepsilon)+\eta_3(\varepsilon)\big)(|\u|_2^2+|\r|_2^2+|\H|_2^2)\\
&\quad\quad\quad\quad\quad\quad\quad\quad\quad\quad\quad\quad\quad\quad\quad\leq\eta(\varepsilon,
t)(|\sqrt{\r}_1\u|_2^2+|\r|_2^2+|\H|_2^2),
\end{split}
\end{equation}
where
\begin{equation*}
\eta(\varepsilon,t)=2\big(\eta_1(\varepsilon)+\eta_2(\varepsilon)+\eta_3(\varepsilon)\big)\max\{1,\frac{1}{\alpha}\textrm{exp}(Cr\sqrt{t})
\}.
\end{equation*}
And, the integrability of $\eta(\varepsilon,t)$ with respect to $t$
on $[0,T]$ comes from the regularity of $\u_1, \u_2$ and the
estimates in Lemmas \ref{r} and \ref{l1} for $\r_i$, $\H_i$ with
$i=1,2.$ Hence,
$$|\sqrt{\r}_1\u|_2^2+|\r|_2^2+|\H|_2^2=0\ \ \textrm{for all} \ t\in (0,T)$$
follows from Gronwall's inequality, and consequently,
$$\u\equiv 0,\quad \r\equiv 0,\quad \H\equiv 0 \ \ \textrm{on} \ Q_T.$$
The proof of uniqueness is complete.

\bigskip\bigskip

\section*{Acknowledgments}
X. Li's research was supported in part by
the NSAF of China (China Scholarship Council).
D. Wang's research was supported in part by the National Science
Foundation under grant DMS-0906160, and by the
Office of Naval Research under grant N00014-07-1-0668.

\bigskip\bigskip

\end{document}